\newcommand{\R}{\mathbb{R}}
\newcommand{\C}{\mathbb{C}}
\newcommand{\N}{\mathbb{N}}
\newcommand{\U}{\underline}
\newtheorem{lettertheorem}{Theorem}
\newtheorem{defin}{Definition}[section]
\newtheorem{theorem}[defin]{Theorem}
\newtheorem{exa}{Example}
\newenvironment{example}{\begin{exa}\rm}{\end{exa}}
\newtheorem{lemma}[defin]{Lemma}
\newtheorem{corollary}[defin]{Corollary}
\newenvironment{proof}
{\noindent{\it Proof.}}{\hfill $\Box$\par\vspace{2.5mm}}
\newtheorem{que}{Question}
\newtheorem{pro}{Problem}
\numberwithin{equation}{section}
\renewcommand{\ps@myheadings}{%
\renewcommand{\@evenhead}%
{{\rm\thepage}\hfil{\sc Ishizaki and Wen}\hfil}%
\renewcommand{\@oddhead}%
{\hfil{{\sc Orders of solutions of linear difference equations}\hfil{\rm\thepage}}}%
\renewcommand{\@evenfoot}{}%
\renewcommand{\@oddfoot}{}%
}\makeatother \pagestyle{myheadings}
\title{\bf\Large All possible orders less than 1 of transcendental entire solutions of linear difference equations with polynomial coefficients}
\author{Katsuya Ishizaki \footnote{\ Supported by JSPS KAKENHI Grant Number 20K03658} and Zhi-Tao Wen
\footnote{\ Supported by the National Natural Science Foundation of China (No.~11971288) and Shantou University SRFT (NTF18029)}
\footnote{Corresponding author.}}
\date{}
\begin{document}
\maketitle

\begin{abstract}

In this paper, we study all possible orders which are less than 1 of transcendental entire solutions of linear difference equations
	\begin{equation}
    P_m(z)\Delta^mf(z)+\cdots+P_1(z)\Delta f(z)+P_0(z)f(z)=0,\tag{+}
    \end{equation}
where $P_j(z)$ are polynomials for $j=0,\ldots,m$. Firstly, we give the condition on existence of transcendental entire solutions of order less than 1 of difference equations (+).
Secondly, we give a list of all possible orders which are less than 1 of transcendental entire solutions of difference equations (+). Moreover, the maximum number of distinct orders which are less than 1 of transcendental entire solutions of difference equations (+) are shown. In addition, for any given rational number $0<\rho<1$, we can construct
a linear difference equation with polynomial coefficients which has a transcendental entire solution of order $\rho$. At least, some examples are illustrated for our main theorems.

\medskip
\noindent
\textbf{Keyword}:~Binomial series,~linear difference equations, growth of order, polynomial coefficients,
asymptotic solution, Poincar\'e-Perron theorem

\medskip
\noindent
\textbf{2020MSC}: 39B32; 30D35.

\end{abstract}

\section{Introduction}

For a function $f$, we denote by $\Delta f(z)=f(z+1)-f(z)$ the difference operator. Let $n$ be a nonnegative integer. Define $\Delta^n f(z)=\Delta(\Delta^{n-1} f(z))$ for $n\geq 1$, and write $\Delta^0 f=f$.
It is well known that the linear difference equation of order $m$
    $$
    a_m(z)\Delta^mf(z)+\cdots+a_1(z)\Delta f(z)+a_0(z)f(z)=0
    $$
with entire coefficients $a_j$, $j=0,1,\ldots,m$, has a system of $m$ meromorphic solutions which are linearly independent over the field of periodic functions with period one, see \cite{Praagman}. In particular, if $a_j$, $j=0, 1, \dots, m$, are polynomials, i.e.,
    \begin{equation}\label{linear.eq}
    P_m(z)\Delta^mf(z)+\cdots+P_1(z)\Delta f(z)+P_0(z)f(z)=0,
    \end{equation}
the order of growth of entire solutions has been investigated in connection with Newton polygon,
where $P_j$, $j=0,1,\dots, m$, $P_m(z)\not\equiv0$, are polynomials with degree $d_j$ for $j=0, 1, \dots, m$.

The {\it Newton polygon} or {\it Newton-Puiseux diagram} has played important roles not only in the theory of linear differential equations but also in the theory of linear difference equations, see e.g., \cite{Birkhoff1930}, \cite[Section 163]{Norlund1924}.
In~\cite[Theorem~1.1]{IY2004},  Ishizaki and Yanagihara adopted the {\it Newton polygon} of \eqref{linear.eq} as the convex hull of ${\mathfrak N} = \bigcup_{j=0}^m {\mathfrak N}_j$, where ${\mathfrak N}_j = \{(x,y) \; ; \; x \geq j, \; y \leq d_{m-j} - (m-j) \}$ for $0 \leq j \leq m$, and established a difference version of Wiman-Valiron theory for entire functions to obtain Theorem~\ref{CF} below under the condition $\rho(f)<1/2$.
Chiang and Feng extended the condition on the order of growth to entire solutions of order strictly less than 1 by
establishing an another difference version of Wiman-Valiron theory in \cite{CF2009}.
 The more precise difference version of Wiman-Valiron estimate was given by Chiang and Feng in \cite{CF2016}, they conclude that
 entire solutions to linear difference equations with polynomial coefficients
of order strictly less than 1 must have completely regular growth of rational order.

\begin{lettertheorem}\label{CF}{\rm ~\cite[Theorem~~4]{CF2016}}\enspace
Let $f$ be an entire solution of order of growth $\rho(f)<1$.
Then the order of growth $\rho(f)$ is a rational number which can be determined from a gradient of the corresponding Newton polygon of \eqref{linear.eq}.
In particular,
\begin{equation*}
\log M(r,f) = Lr^{\rho(f)}(1+o(1)),
\end{equation*}
where $L>0$ and $M(r,f)=\max_{|z|=r}|f(z)|$. Moreover,
the solution has completely regular growth.
\end{lettertheorem}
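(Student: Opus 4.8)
The plan is to run a difference-operator version of the Wiman--Valiron method at a point where $|f|$ attains its maximum on $|z|=r$, and to read off the order from the dominant balance that the equation forces among its terms. Write $\mu(r)$ and $\nu(r)$ for the maximum term and central index of the Taylor expansion of $f$, and let $z_r$, $|z_r|=r$, satisfy $|f(z_r)|=M(r,f)$. Since $\rho(f)<1$, the classical theory supplies $\log M(r,f)\sim\log\mu(r)$ and $\nu(r)=r\frac{d}{dr}\log\mu(r)\,(1+o(1))$ off a set $E$ of finite logarithmic measure, together with $\nu(r)=O(r^{\rho(f)+\veps})$, so in particular $\nu(r)/r\to0$. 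The first technical step is the difference analogue, which I would quote from the Chiang--Feng estimates: for $r\notin E$,
\[
\frac{\Delta^{k}f(z_r)}{f(z_r)}=\Big(\frac{\nu(r)}{z_r}\Big)^{k}\big(1+o(1)\big),\qquad k=0,1,\dots,m,
\]
reflecting the heuristic $f(z+1)/f(z)\approx(1+1/z)^{\nu(r)}\approx e^{\nu(r)/z}$, valid precisely because $\nu(r)/r\to0$ when $\rho(f)<1$.

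Next I would divide \eqref{linear.eq} by $f(z_r)$ and insert these asymptotics together with $P_j(z_r)=c_jz_r^{d_j}(1+o(1))$, $c_j\neq0$. The $j$th term then has modulus comparable to $|c_j|\,\nu(r)^{j}r^{d_j-j}$, and anticipating $\nu(r)\asymp r^{\sigma}$ for some $\sigma\in(0,1)$ this is of order $r^{d_j-j(1-\sigma)}$. Because the left-hand side vanishes identically, the largest terms must cancel to leading order, so there exist indices $k_1<k_2$ with $d_{k_1}-k_1(1-\sigma)=d_{k_2}-k_2(1-\sigma)\ge d_j-j(1-\sigma)$ for every $j$. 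Solving gives $\sigma=1-\dfrac{d_{k_2}-d_{k_1}}{k_2-k_1}$, which is exactly the gradient of an upper edge of the Newton polygon $\co(\mathfrak N)$ joining the vertices attached to $P_{k_1}$ and $P_{k_2}$. Self-consistency $\sigma=\rho(f)$ (seen by integrating $\nu(r)\asymp r^\sigma$ back to $\log\mu(r)$) then forces $\rho(f)$ to equal this rational gradient; that the maximal exponent is attained at least twice, so that cancellation is possible, follows from convexity of the polygon and the impossibility of a single term balancing $0$.

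Finally, to upgrade this to the sharp statement $\log M(r,f)=Lr^{\rho(f)}(1+o(1))$ with completely regular growth, I would integrate $\nu(r)=r\frac{d}{dr}\log\mu(r)$: once $\nu(r)=c\,r^{\rho(f)}(1+o(1))$ is pinned down, $\log\mu(r)\sim\frac{c}{\rho(f)}r^{\rho(f)}$ and $\log M(r,f)\sim\log\mu(r)$ yield the claim with $L=c/\rho(f)$. The delicate point is that the Wiman--Valiron equivalences hold only off $E$, so proving genuine regular (not merely normal) growth requires showing that $\nu(r)/r^{\rho(f)}$ converges rather than oscillates. The cleanest route is to invoke the Poincar\'e--Perron theory for \eqref{linear.eq}: after the standard reduction it admits a fundamental system whose members have prescribed binomial-type asymptotics, and comparing $f$ against this system forces the limit to exist and identifies the constant $L$. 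I expect this regularity step, together with the uniform control of the $o(1)$ error terms across the exceptional set, to be the main obstacle; the algebraic extraction of the rational order from the Newton polygon is comparatively routine once the difference Wiman--Valiron estimates are in hand.
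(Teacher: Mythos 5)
You should first be aware of the status of this statement inside the paper: Theorem~\ref{CF} is a \emph{quoted} result, attributed to \cite[Theorem~4]{CF2016}, and the paper supplies no proof of it at all. So there is no internal argument to compare against; what your sketch should be measured against is the original Chiang--Feng proof, and in fact your proposal reconstructs exactly that route. The difference Wiman--Valiron estimate $\Delta^{k}f(z_r)/f(z_r)=(\nu(r)/z_r)^{k}(1+o(1))$ outside a set of finite logarithmic measure (valid precisely because $\rho(f)<1$ forces $\nu(r)/r\to 0$), the substitution into \eqref{linear.eq}, the dominant balance $\nu(r)^{j}r^{d_j-j}$, and the extraction of $\sigma=1-(d_{k_2}-d_{k_1})/(k_2-k_1)$ are the content of \cite{CF2009} and \cite{CF2016}; your formula for $\sigma$ agrees with the paper's gradient list \eqref{orderlist.eq}. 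As an outline of the cited proof, this is sound.

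Two points in your sketch are, however, genuinely harder than you indicate, and they are where the actual content of \cite{CF2016} lies. First, the balance argument as stated gives, for each admissible $r$, \emph{some} pair $(k_1,k_2)$ of indices achieving the maximum, but this pair may a priori depend on $r$; since distinct pairs yield distinct exponents $\sigma$, concluding that a single gradient governs $\nu(r)$ for all large $r$ (rather than along a sequence) requires an additional argument using the monotonicity of $\nu$ and the finiteness of the set of candidate gradients. Second, upgrading normal growth to $\log M(r,f)=Lr^{\rho(f)}(1+o(1))$ with completely regular growth is not a routine integration of $\nu(r)=r\frac{d}{dr}\log\mu(r)$: one must show $\nu(r)/r^{\rho(f)}$ converges and control the exceptional set, which is the refined estimate that distinguishes \cite{CF2016} from \cite{CF2009}. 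You flag both issues, but your proposed fix --- comparison with a Poincar\'e--Perron fundamental system --- is really a gesture toward a \emph{different} method, namely the one this paper itself uses for its own theorems: expand $f(z)=\sum_{n}a_n z^{\U{n}}$ in binomial series, derive the recurrence \eqref{recu.eq} for the coefficients, apply Adams' asymptotic theory \cite{Adams} to that recurrence, and read off the order from $\chi(\{a_n\})$ via Theorem~\ref{Convergence}. That coefficient-side route has the advantage of also producing \emph{existence} of solutions of each admissible order (Theorem~\ref{two.theorem}), which Wiman--Valiron arguments cannot do, but making it yield the sharp asymptotics and regular growth for an \emph{arbitrary} entire solution of order less than $1$ requires the linear-independence observation over the period-one field made in the introduction. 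Either route can be completed, but neither completion is as short as your sketch suggests.
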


The natural question arises from Theorem \ref{CF} whether all possible orders of entire solutions \eqref{linear.eq} are determined from
a gradient of the corresponding Newton polygon of \eqref{linear.eq}.
Unfortunately, the general situation is not so simple.
For example, there exists an entire solution $f(z)=1/\Gamma(z)$ of the difference equation $z\Delta f(z)+(z-1)f(z)=0$, which is of order $\rho(f)=1$.
In fact, the entire function $g(z)=\pi(z)f(z)$ is also the solution of $z\Delta f(z)+(z-1)f(z)=0$, where $\pi(z)$ is any entire
periodic function with period one. Note that
for any $\sigma\in[1,\infty)$, there exists a prime periodic entire function $\pi(z)$ of order $\rho(\pi)=\sigma$ by \cite[Theorem 1]{Ozawa}. Hence
the growth of order of $g$ is not determined by the equation.

However, the case of the entire solutions of order less than 1 is different. Suppose that $f_1,\ldots,f_n$ are entire functions of order less than 1 linearly independent over the periodic function field of period one, and $f=\pi_1f_1+\cdots+\pi_nf_n$, where $\pi_1,\ldots,\pi_n$
are periodic functions with period one. If $\rho(f)<1$,
then $\pi_1,\ldots,\pi_n$ are constants. Therefore,
all possible orders which are less than 1 of entire solutions to a linear difference equation with polynomial coefficients could be determined by the equation itself.

It inspires us to consider the entire solutions of order less than 1 of linear difference equations with polynomial coefficients. In fact, there exists
an entire solution of order less than 1 of linear difference equations with polynomial coefficients under some conditions. For example, there exists an entire solution $f$ of order $\rho(f)=1/2$ of the difference equation
    $$
    (4z+6)\Delta^2 f(z)+3\Delta f(z)+f(z)=0.
    $$

Wittich \cite[pp. 65--68]{Wittich} and Gundersen et al. \cite{GSW1998} discussed whether
the possible rational order given by the Newton polygon for the entire solutions of linear differential equations with polynomial coefficients could be attained or not. Chiang and Feng in \cite{CF2016} ask this question for difference equations \eqref{linear.eq}.
According to the discussion above, some questions can be asked. For example

\begin{itemize}
\item
Which condition implies \eqref{linear.eq} has at least one transcendental entire solution of order less than 1?
\item
Is it possible to give a list of all possible orders which are less than 1 of transcendental entire solutions of a given difference equation \eqref{linear.eq}?
\item
What is the maximum number of distinct orders which are less than 1 of transcendental entire solutions of a given difference equation \eqref{linear.eq}?
\end{itemize}

In this paper, we answer these three questions by using binomial series, which is a different statement and method from the paper of Gundersen et al. \cite{GSW1998}. We also response Chiang and Feng's concerns in \cite{CF2016}.
Moreover, for any given rational number $0<\rho<1$, we show that it is possible to construct a linear difference equation with polynomial coefficients which has at least one transcendental entire solution of order $\rho$. At least, several examples are given to illustrate our results.

\section{Statement of results}

Consider equation \eqref{linear.eq}. For convenience,
we set $d_j=\deg P_j$. We define a strictly decreasing finite sequence of non-negative integers
    \begin{equation}\label{sequence.eq}
    s_1>s_2>\cdots>s_p\geq 0
    \end{equation}
in the following manner. We choose $s_1$ to be the unique integer satisfying
    \begin{equation}\label{s1.eq}
    d_{s_1}=\max_{0\leq k\leq m} {d_k} \quad\text{and}\quad d_{s_1}>d_{k}\quad\text{for all}~0\leq k<s_1.
    \end{equation}
Then given $s_j$, $j\geq 1$, we define $s_{j+1}$ to be the unique integer satisfying
    \begin{equation}\label{sj.eq}
    d_{s_{j+1}}-s_{j+1}> d_{s_{j}}-s_{j}
    \end{equation}
and
    \begin{equation}\label{sj2.eq}
    d_{s_{j+1}}=\max_{0\leq k<s_j} {d_k} \quad\text{and}\quad d_{s_{j+1}}>d_{k}\quad\text{for all}~0\leq k<s_{j+1}.
    \end{equation}
For a certain $p$, the integer $s_p$ will exist, but the integer $s_{p+1}$ will not exist, and the sequence $s_1,s_2,\ldots,s_p$ terminates with $s_p$.
Obviously, $1\leq p\leq n$ and \eqref{sequence.eq} holds.

We mention that the integers $s_1,\ldots,s_p$ in \eqref{sequence.eq} could also be expressed in the following manner:
    $$
    s_1=\min\left\{j:~d_j=\max_{0\leq k\leq m}{d_k}\right\}
    $$
and given $s_{j}$ and $j\geq 1$, we define
    $$
    s_{j+1}=\min\left\{i:~ d_{s_{i}}-s_{i}> d_{s_{j}}-s_{j}\quad\text{and}\quad d_i=\max_{0\leq k<s_j}{d_k} \right\}.
    $$

From the definition of the sequence \eqref{sequence.eq},
we see that
    $
    d_{s_1}>d_{s_2}>\cdots>d_{s_p}.
    $
Moreover, it is obvious that
    $$
    d_{s_p}-s_p>\cdots>d_{s_2}-s_2>d_{s_1}-s_1.
    $$

Correspondingly, we define $j=1,2,\ldots,p-1$
    \begin{equation}\label{orderlist.eq}
    \rho_j=1+\frac{d_{s_{j+1}}-d_{s_{j}}}{s_{j}-s_{j+1}}
    \end{equation}
when $p\geq 2$. From \eqref{sequence.eq} to \eqref{orderlist.eq}, we observe that $0<\rho_j<1$ for each $j$, $1\leq j\leq p-1$. Moreover, we see that
    $$
    1>\rho_1>\rho_2>\cdots>\rho_p>0.
    $$

\bigskip

In order to answer our four questions in the Section 1. We state our results as follows

\begin{theorem}\label{one.theorem}
Suppose that $p=1$. There does not exist any transcendental entire solution
of order less than 1 of difference equations \eqref{linear.eq} with polynomial coefficients.
\end{theorem}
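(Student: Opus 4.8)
The plan is to argue by contradiction: assume \eqref{linear.eq} has a transcendental entire solution $f$ with $\rho(f)<1$, and show that under $p=1$ the single term $P_{s_1}(z)\Delta^{s_1}f(z)$ strictly dominates every other term of the equation along a path of maximal modulus, so the left-hand side cannot vanish. First I would invoke Theorem~\ref{CF}. It gives $\log M(r,f)=Lr^{\rho(f)}(1+o(1))$ with $L>0$; if $\rho(f)=0$ this forces $M(r,f)$ to be bounded, hence $f$ constant, contradicting transcendence. Thus $\rho:=\rho(f)\in(0,1)$, $\rho$ is rational, $f$ has completely regular growth, and the central index satisfies $\nu(r)=(1+o(1))\rho L r^{\rho}$.

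The first key step is combinatorial. For the indices $j$ with $P_j\not\equiv0$ set $E_j(\rho)=(d_j-j)+j\rho$. I would show that $p=1$ forces $d_j-j\le d_{s_1}-s_1$ for all $j$: an index $j<s_1$ with $d_j-j>d_{s_1}-s_1$ is exactly what the recursion \eqref{sj.eq}--\eqref{sj2.eq} would select as a second term $s_2$, contrary to $p=1$. Granting this, $E_{s_1}(\rho)>E_j(\rho)$ for every $\rho\in(0,1)$ and every $j\neq s_1$. Indeed, writing $E_j(\rho)=d_j+j(\rho-1)$, for $j>s_1$ one has $E_j-E_{s_1}=(d_j-d_{s_1})+(j-s_1)(\rho-1)<0$ from $d_j\le d_{s_1}$ and $\rho<1$; for $j<s_1$ one has $E_j-E_{s_1}=\big((d_j-j)-(d_{s_1}-s_1)\big)+(j-s_1)\rho<0$ from $d_j-j\le d_{s_1}-s_1$ and $\rho>0$.

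The second step is analytic. Using the difference version of Wiman--Valiron theory for entire functions of order less than $1$ (the estimates underlying Theorem~\ref{CF}), I would fix a sequence $r_k\to\infty$ avoiding the relevant exceptional set and points $z_k$ with $|z_k|=r_k$ and $|f(z_k)|=M(r_k,f)$, along which $\Delta^j f(z)/f(z)=(1+o(1))(\nu(r)/z)^j$ for each $j=0,\dots,m$. Since $|P_j(z)|\sim|\operatorname{lead}(P_j)|\,r^{d_j}$ there, this yields $|P_j(z)\Delta^jf(z)|=(1+o(1))\,c_j\,r^{E_j(\rho)}|f(z)|$ with $c_j=|\operatorname{lead}(P_j)|(\rho L)^j>0$. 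The factor $P_{s_1}(z)\Delta^{s_1}f(z)$ is nonzero for large $r$, so dividing the equation by it turns each remaining quotient into $O\!\left(r^{E_j(\rho)-E_{s_1}(\rho)}\right)=o(1)$ by the previous step, and the equation reads $1+o(1)=0$, the desired contradiction. When $s_1=0$ the computation degenerates to the even simpler statement that $P_0(z)f(z)$ dominates.

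I expect the combinatorial reduction to be routine once the definition \eqref{sequence.eq}--\eqref{sj2.eq} is unwound, and the real work to be the analytic input: making the asymptotic $\Delta^jf/f\sim(\nu(r)/z)^j$ precise and uniform in $j$ along a maximal-modulus path, together with the control of the exceptional set on which it may fail. This is precisely where completely regular growth and the difference Wiman--Valiron estimates enter, and where the hypothesis $\rho<1$ (guaranteeing $\nu(r)/z\to0$, so that the shifts behave like derivatives) is indispensable. An alternative would be to pass to the binomial-series coefficients of $f$ and analyze the resulting recursion by the Poincar\'e--Perron theorem, trading the Wiman--Valiron estimate for the claim that under $p=1$ the recursion admits no solution decaying fast enough to represent an entire function of order below $1$.
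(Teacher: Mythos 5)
Your proposal is correct in substance, but it proves the theorem by a genuinely different route than the paper. The paper never touches $f$ function-theoretically: it expands a putative solution in binomial series $f(z)=\sum_{n\geq 0}a_nz^{\underline{n}}$, derives the coefficient recurrence \eqref{recu.eq}, and applies Adams' asymptotic theory of irregular linear difference equations to that recurrence. When $p=1$, the degree bounds \eqref{Qnk.eq} force every slope of Adams' convex broken line to be at most $1$, so every solution of the recurrence has $\chi(\{a_{n,j}\})=1/\mu_j\geq 1$, and Theorem~\ref{Convergence} together with \cite[Corollary A.1]{IW2021} shows such a series cannot represent an entire function of order less than $1$. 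You instead argue with growth estimates at maximum-modulus points: Theorem~\ref{CF} disposes of order $0$ and supplies regular growth, and the Chiang--Feng difference Wiman--Valiron estimate turns the equation into a comparison of the exponents $E_j(\rho)=(d_j-j)+j\rho$, whose strict maximization at $j=s_1$ you compute correctly. In effect you re-run, in the special case at hand, the argument by which Chiang and Feng prove Theorem~\ref{CF} itself; this is shorter, but it leans on that cited machinery (including extracting $\nu(r)\sim\rho Lr^{\rho}$ from complete regularity of growth), whereas the paper's recurrence apparatus (the $Q(n,i)$, the broken line, the formal solutions) is exactly what it reuses to prove Theorem~\ref{two.theorem}, so one construction serves both the nonexistence and the existence/counting statements. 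The ``alternative'' in your closing sentence --- passing to binomial-series coefficients and invoking Poincar\'e--Perron/Adams --- is precisely the paper's proof.

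One caveat, which affects the paper's proof equally and is therefore not a defect of your route relative to the paper's: your combinatorial step, ``$p=1$ forces $d_j-j\leq d_{s_1}-s_1$ for all $j$,'' does not follow from the literal text of \eqref{sj.eq}--\eqref{sj2.eq}, because \eqref{sj2.eq} forces the candidate for $s_2$ to realize $\max_{0\leq k<s_1}d_k$, and that particular candidate may violate \eqref{sj.eq} even when some other index satisfies $d_j-j>d_{s_1}-s_1$ (for instance, degrees $(d_0,\dots,d_4)=(0,8,0,9,10)$ give $s_1=4$, sole candidate $k=3$ failing \eqref{sj.eq}, hence literally $p=1$, yet $d_1-1=7>6$). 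The paper's own Example~\ref{one.example} shows the definition cannot be meant literally: for \eqref{one.ex} one has $(d_3,d_2,d_1,d_0)=(2,1,0,0)$, the only index permitted by \eqref{sj2.eq} is $k=2$, which fails \eqref{sj.eq} since $d_2-2=-1\not>-1=d_3-3$, so literally $p=1$ --- yet \eqref{one.ex} has an entire solution of order $1/3$, and the paper itself declares $s_1=3$, $s_2=0$, $p=2$ there. The intended definition is evidently ``$s_{j+1}$ is the index of maximal degree among $\{k<s_j:\ d_k-k>d_{s_j}-s_j\}$''; under that reading your claim is immediate (if some $j<s_1$ had $d_j-j>d_{s_1}-s_1$, the set defining $s_2$ would be nonempty and $p\geq 2$), the paper's bounds \eqref{Qnk.eq} and $Q(n,k)=0$ for $k>d_{s_1}-s_1$ also become correct, and both proofs go through. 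With that understanding, your proof is complete.
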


\begin{theorem}\label{two.theorem}
If $p\geq 2$, then there exists at least one transcendental entire solution of order less than 1 of difference equations \eqref{linear.eq}. Moreover,
 there exist at lease one and at most $(d_{s_{j+1}}-s_{j+1})-(d_{s_j}-s_j)$ linearly independent solutions of order less than 1 of \eqref{linear.eq} with polynomial coefficients
such that $\rho(f)=\rho_j$ for $j=1,2,\ldots,p-1$.
\end{theorem}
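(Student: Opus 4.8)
The plan is to look for solutions in the form of a binomial series $f(z)=\sum_{n\ge 0}c_n\binom{z}{n}$ and to convert \eqref{linear.eq} into a linear recurrence for the coefficients $c_n$. Using $\Delta\binom{z}{n}=\binom{z}{n-1}$ one gets $\Delta^jf=\sum_n c_{n+j}\binom{z}{n}$, while multiplication by a polynomial of degree $d_j$ expands $z^{d}\binom zn$ in the finitely many basis elements $\binom{z}{n},\dots,\binom{z}{n+d}$ with coefficients that are polynomials in $n$. Equating the coefficient of each $\binom zn$ to zero then produces a recurrence $\sum_\sigma R_\sigma(n)\,c_{n+\sigma}=0$, where each $R_\sigma$ is a polynomial in $n$ and the shift $\sigma$ runs over a finite range dictated by the data $(j,d_j)$. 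The first task is bookkeeping: I would compute the shift range and $\deg_n R_\sigma$ from $(j,d_j)$ and show that the Newton polygon attached to this recurrence (the convex hull of the points $(\sigma,\deg_n R_\sigma)$) has its vertices in bijection with the integers $s_1,\dots,s_p$, with the $j$-th edge having horizontal length exactly $(d_{s_{j+1}}-s_{j+1})-(d_{s_j}-s_j)$ and a slope encoding $\rho_j$.

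Next I would read off the asymptotics of $c_n$ edge by edge. For each edge I would make a normalizing substitution $c_n=\gamma_n u_n$, choosing the factor $\gamma_n$ of Gamma-function type so as to cancel the common polynomial growth of the coefficients sitting on that edge; then the recurrence for $u_n$, restricted to the terms of that edge, becomes asymptotically a constant-coefficient recurrence whose order equals the edge's horizontal length. The Poincar\'e--Perron theorem then applies and produces, for that edge, as many independent solutions with prescribed ratio $u_{n+1}/u_n\to(\text{a root of the limiting characteristic polynomial})$ as the horizontal length of the edge. Undoing the substitution gives, for each such solution, a definite Gamma-type decay rate for $c_n$, hence a convergent binomial series defining a genuine entire function; the series does not terminate, so the solution is transcendental.

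The last analytic step translates the coefficient asymptotics into the order of growth. I would establish (or invoke) a lemma stating that a binomial series whose coefficients have the decay rate produced by edge $j$ is entire of order exactly $\rho_j$, with $\log M(r,f)=Lr^{\rho_j}(1+o(1))$; this is consistent with Theorem~\ref{CF} and with the completely regular growth asserted there. Combining the three steps, existence of at least one order-$\rho_j$ solution for each $j=1,\dots,p-1$ follows because every edge is nondegenerate (its horizontal length is a positive integer), and the number of linearly independent solutions with $\rho(f)=\rho_j$ is at most the number of Poincar\'e--Perron solutions attached to edge $j$, namely its horizontal length $(d_{s_{j+1}}-s_{j+1})-(d_{s_j}-s_j)$. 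Linear independence and the upper bound would follow from the distinctness of the limiting ratios together with a Casoratian-type argument, while $p\ge 2$ guarantees at least one edge and hence at least one transcendental entire solution of order less than $1$.

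The hard part will be the Poincar\'e--Perron step. The classical theorem requires the normalized recurrence to have coefficients with finite limits and, for the sharpest conclusions, distinct moduli of the characteristic roots; here the coefficients grow polynomially in $n$ and several roots attached to a single edge may share the same modulus (the degenerate case of Poincar\'e--Perron). I therefore expect to need a refined version (Perron's second theorem, or the Birkhoff--Trjitzinsky asymptotic theory) together with a carefully chosen normalization $\gamma_n$, and to have to rule out contamination of the order by subdominant solutions coming from the other edges. Matching the solution count to the exact edge length, and proving that the order is exactly $\rho_j$ (both the upper and the lower bound on $\log M(r,f)$), will be the most delicate bookkeeping.
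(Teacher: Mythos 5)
Your proposal is correct in outline and follows essentially the same route as the paper's proof: a falling-factorial/binomial expansion leading to the polynomial-coefficient recurrence \eqref{recu.eq}, a Gamma-type normalization per edge (the paper takes $a_{n+m}=x(n)/[(n+m)!]^{\mu_j}$ with $\mu_j=1/\rho_j$ in \eqref{xrelation.eq}), the Newton polygon of that recurrence with vertices at the $s_j$ and edges of horizontal length $(d_{s_{j+1}}-s_{j+1})-(d_{s_j}-s_j)$, and Theorem~\ref{Convergence} to convert the coefficient decay $\chi(\{a_n\})=1/\mu_j$ into order exactly $\rho_j$. The refined asymptotic theory you correctly anticipate needing in place of classical Poincar\'e--Perron is precisely what the paper invokes, namely Adams' irregular-case results \cite{Adams}, which supply the $(d_{s_{j+1}}-s_{j+1})-(d_{s_j}-s_j)$ independent solutions per edge with asymptotics $a_n^{(j,t)}\sim n^{-\mu_j n}e^{\mu_j n}\gamma_{j,t}^n e^{L_j(n)}n^{r_j}$.
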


We obtain the following corollaries from Theorem \ref{one.theorem} and Theorem \ref{two.theorem}.
\begin{corollary}
There exists at least one transcendental entire solution of order less than 1 of difference equations \eqref{linear.eq} if and only if $p\geq 2$.
\end{corollary}

\begin{corollary}
If $p\geq 2$, then there exist at most $(s_1-s_p)-(d_{s_1}-d_{s_p})$ transcendental entire solutions of order less than 1 of difference equations \eqref{linear.eq}.
\end{corollary}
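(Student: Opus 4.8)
The plan is to deduce this bound directly from Theorem~\ref{two.theorem} by summing the per-order estimates over all admissible orders and observing that the resulting sum telescopes. First I would fix the interpretation of ``number of solutions'' as the maximal number of transcendental entire solutions of order less than $1$ that are linearly independent over $\C$; this is the natural count in view of the discussion in Section~1, where it was noted that for a solution of order less than $1$ the periodic coefficients in any representation $f=\pi_1f_1+\cdots+\pi_nf_n$ must reduce to constants.

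Next I would argue that every transcendental entire solution of order less than $1$ has order equal to one of the numbers $\rho_1,\ldots,\rho_{p-1}$ defined in \eqref{orderlist.eq}. This follows by combining Theorem~\ref{CF}, which forces the order of any such solution to be a rational gradient of the Newton polygon of \eqref{linear.eq}, with the analysis underlying Theorem~\ref{two.theorem}, which identifies those admissible gradients with exactly the values $\rho_j$. Since two entire functions of different orders cannot be linearly dependent over $\C$, the space of solutions of order less than $1$ decomposes over the distinct orders $\rho_1,\ldots,\rho_{p-1}$, so that the total count is the sum, over $j=1,\ldots,p-1$, of the number of linearly independent solutions of order exactly $\rho_j$.

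Applying the upper bound from Theorem~\ref{two.theorem} to each summand, the total number of linearly independent transcendental entire solutions of order less than $1$ is at most
\begin{equation*}
\sum_{j=1}^{p-1}\bigl[(d_{s_{j+1}}-s_{j+1})-(d_{s_{j}}-s_{j})\bigr].
\end{equation*}
Writing $b_j=d_{s_j}-s_j$, each summand is $b_{j+1}-b_j$, so the sum telescopes to $b_p-b_1=(d_{s_p}-s_p)-(d_{s_1}-s_1)$, which rearranges to $(s_1-s_p)-(d_{s_1}-d_{s_p})$. This is precisely the claimed bound.

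The telescoping computation itself is routine; the only point requiring care is the reduction in the second paragraph, namely verifying that the orders produced by Theorem~\ref{two.theorem} are the \emph{only} orders below $1$ that can occur, and that solutions of distinct orders genuinely contribute independent dimensions so that the per-order bounds may simply be added. I expect this exhaustion-and-independence step to be the main (if modest) obstacle, since it is where the structural input of Theorems~\ref{CF} and~\ref{two.theorem} is actually used; once it is in place, the corollary is immediate.
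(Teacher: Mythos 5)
Your proposal is correct and takes essentially the same route as the paper: in the final paragraph of its proof of Theorem~\ref{two.theorem}, the paper bounds the total number of transcendental entire solutions of order less than $1$ by the sum $\sum_{j=1}^{p-1}\bigl[(d_{s_{j+1}}-s_{j+1})-(d_{s_j}-s_j)\bigr]$ of the per-order bounds and evaluates this telescoping sum as $(s_1-s_p)-(d_{s_1}-d_{s_p})$. Your extra attention to the exhaustion of possible orders and to adding the per-order counts merely makes explicit what the paper leaves implicit, so the two arguments coincide.
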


\begin{corollary}
There does not exist any transcendental entire solution $f$ of order $\rho(f)=0$ of difference equations \eqref{linear.eq}.
\end{corollary}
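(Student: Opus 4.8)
The plan is to deduce this directly from Theorems~\ref{one.theorem} and~\ref{two.theorem}, treating the two cases $p=1$ and $p\geq 2$ separately. The guiding observation is that a transcendental entire function of order $\rho(f)=0$ is, in particular, a transcendental entire function of order less than $1$; hence such a solution, if it existed, would have to appear among the solutions classified by those two theorems.

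First I would dispose of the case $p=1$. Here Theorem~\ref{one.theorem} asserts that \eqref{linear.eq} admits no transcendental entire solution of order less than $1$ whatsoever, so in particular none of order $0$. For the case $p\geq 2$, I would invoke Theorem~\ref{two.theorem}: every transcendental entire solution of order less than $1$ must have order equal to one of the values $\rho_1,\dots,\rho_{p-1}$ defined in \eqref{orderlist.eq}. Since the discussion following \eqref{orderlist.eq} records that $0<\rho_j<1$ for each $j$ with $1\leq j\leq p-1$, every such order is strictly positive. Thus no transcendental entire solution can have order exactly $0$, which gives the claim in this case as well.

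The only point that requires a little care is the implicit exhaustiveness used in the second case, namely that the finite list $\{\rho_1,\dots,\rho_{p-1}\}$ captures all attainable orders below $1$, so that no order-$0$ solution can slip outside it. This is precisely what Theorem~\ref{CF} supplies: any entire solution of order $\rho(f)<1$ has rational order read off from a gradient of the Newton polygon of \eqref{linear.eq}, and the numbers $\rho_j$ of \eqref{orderlist.eq} are exactly those admissible gradients, all of which are strictly positive by construction. Consequently there is no genuine obstacle beyond assembling these facts, and the corollary follows at once.
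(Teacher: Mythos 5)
Your proof is correct and follows essentially the same route as the paper, which presents this corollary as an immediate consequence of Theorems~\ref{one.theorem} and~\ref{two.theorem} together with the observation after \eqref{orderlist.eq} that $0<\rho_j<1$ for each $j$. The exhaustiveness point you raise is legitimate, though the paper settles it inside the proof of Theorem~\ref{two.theorem} (where the remaining $m-[(s_1-s_p)-(d_{s_1}-d_{s_p})]$ independent solutions are shown not to be entire of order less than $1$) rather than by appealing to Theorem~\ref{CF}; either way the gap is closed.
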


\section{Binomial series}

We recall and study the properties of binomial series,~\cite{IW2021},~\cite{IY2004}.
Define $z^{\underline{0}}=1$ and
\begin{equation}
z^{\underline{n}}=z(z-1)\cdots(z-n+1)=n!\begin{pmatrix}
z\\
n
\end{pmatrix},\quad n=1, 2, 3, \dots,\label{1.3}
\end{equation}
which is called a {\it falling factorial}.
This yields
\begin{equation}
\Delta z^{\underline{n}}=(z+1)^{\underline{n}}-z^{\underline{n}}=nz^{\underline{n-1}}\, ,\label{21.02}
\end{equation}
which corresponds to $(z^n)'=nz^{n-1}$ in the differential calculus. Consider the formal series of the form
\begin{equation}
Y(z)=\sum_{n=0}^\infty a_n z^{\underline{n}},\quad a_n\in\mathbb C,\quad n=0,1,2, \dots.\label{1.5}
\end{equation}
For a fixed $z$, if $\sum_{n=0}^\infty |a_n||z^{\underline{n}}|$ converges, we say that $Y(z)$ in \eqref{1.5} absolutely converges at $z$. We write  the limit function of $Y(z)$ as $y(z)$.
Let $\{\alpha_n\}$ be a sequence satisfying $|\alpha_n|\to 0$. We define a quantity concerning  $\{\alpha_n\}$ as
\begin{equation}
\chi(\{\alpha_n\})=\limsup_{n\to\infty}\frac{n\log n}{-\log|\alpha_n|}.\label{3.1}
\end{equation}

 In \cite{IW2021} we consider properties of binomial series in the complex domain and discuss a criterion for convergence of binomial series in connection with the order of growth of entire functions. We state our result as follows.
\begin{lettertheorem} \cite[Theorem 1.1]{IW2021}\label{Convergence}
Suppose that $\chi(\{a_n\})<1$. Then the formal series $Y(z)$ given by \eqref{1.5} converges to $y(z)$ uniformly on every compact subset in $\mathbb C$. Moreover, the order of growth of $y(z)$ coincides with $\chi(\{a_n\})$.
\end{lettertheorem}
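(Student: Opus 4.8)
The plan is to prove the two inequalities $\rho(y)\le\chi(\{a_n\})$ and $\rho(y)\ge\chi(\{a_n\})$ separately, both resting on sharp estimates for the falling factorials. The elementary input is that for $|z|\le r$ one has $|z^{\underline{n}}|=\prod_{k=0}^{n-1}|z-k|\le\prod_{k=0}^{n-1}(r+k)=\Gamma(r+n)/\Gamma(r)$, uniformly in $z$ on the disk of radius $r$. Writing $\rho=\chi(\{a_n\})$ and fixing $\sigma$ with $\rho<\sigma<1$, the hypothesis $\chi(\{a_n\})<1$ gives, directly from the definition of $\chi$, the coefficient bound $|a_n|\le n^{-n/\sigma}$ for all large $n$.

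\textbf{Convergence and the upper bound.} First I would show $\sum_n|a_n|\,\Gamma(r+n)/\Gamma(r)<\infty$: by Stirling $\Gamma(r+n)/\Gamma(r)$ grows like $n!\sim n^{n}e^{-n}$ for fixed $r$, so the general term is $\lesssim (\text{poly})\,e^{-n}n^{n(1-1/\sigma)}$, which decays super-exponentially since $1-1/\sigma<0$. As the bound on $|z^{\underline{n}}|$ is uniform on $|z|\le r$, the series $Y(z)$ converges uniformly on compact sets and the limit $y$ is entire. For the order I would estimate $M(r,y)\le\sum_n|a_n|\,\Gamma(r+n)/\Gamma(r)$ by the maximal-term method: setting $b_n(r)=n^{-n/\sigma}\Gamma(r+n)/\Gamma(r)$ and differentiating $\log b_n$ in $n$, the maximal index is $n^{\ast}\sim r^{\sigma}/e$, and a Stirling expansion (in which the leading $\pm n^{\ast}\log r$ terms cancel) gives $\log b_{n^{\ast}}\sim r^{\sigma}/(e\sigma)$. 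Since only polynomially many terms contribute near $n^{\ast}$, one gets $\log M(r,y)\le Cr^{\sigma}(1+o(1))$, whence $\rho(y)\le\sigma$; letting $\sigma\downarrow\rho$ yields $\rho(y)\le\chi(\{a_n\})$.

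\textbf{The lower bound} $\rho(y)\ge\chi(\{a_n\})$ is the conceptual heart, since a priori cancellation could make $y$ grow more slowly than its terms; the remedy is to invert the coefficients. Because $\Delta^{n}y(0)=n!\,a_n$ and $\Delta^{n}y(0)=\sum_{k=0}^{n}\binom{n}{k}(-1)^{n-k}y(k)$, a residue computation yields the N\"orlund--Rice type representation $a_n=\frac{1}{2\pi i}\oint_{|s|=R}\frac{y(s)}{s^{\underline{n+1}}}\,ds$ for every $R>n$, the only poles inside the circle being the simple ones at $0,1,\dots,n$. Estimating trivially gives $|a_n|\le R\,M(R,y)\big/\prod_{j=0}^{n}(R-j)$. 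Assuming $\rho(y)=\rho'$, so that $\log M(R,y)\le R^{\rho'+\veps}$ for large $R$, and using $\log\prod_{j=0}^{n}(R-j)\sim n\log R$ for $R\gg n$, I obtain $-\log|a_n|\ge n\log R-R^{\rho'+\veps}-\log R+o(n\log R)$. Optimizing in $R$, the choice $R\sim\bigl(n/(\rho'+\veps)\bigr)^{1/(\rho'+\veps)}$ (which satisfies $R>n$ because $\rho'+\veps<1$) makes the right-hand side $\sim\frac{n\log n}{\rho'+\veps}$, so $\chi(\{a_n\})=\limsup_n\frac{n\log n}{-\log|a_n|}\le\rho'+\veps$. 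Letting $\veps\downarrow0$ gives $\chi(\{a_n\})\le\rho(y)$, and the two inequalities together prove the theorem.

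The hard part is exactly this lower bound: bounding $|a_n|$ above by $M(R,y)$ without losing logarithmic accuracy. The naive estimate $|a_n|\le\frac{2^{n}}{n!}M(n,y)$ read off directly from $\Delta^{n}y(0)$ only gives $\chi\le1$, since it is forced to use radius $R=n$. The gain comes precisely from the freedom to push the contour out to the optimal radius $R\sim n^{1/(\rho'+\veps)}\gg n$ in the N\"orlund--Rice integral, after which the Stirling bookkeeping must be carried out carefully enough to retain the leading $n\log n$ term.
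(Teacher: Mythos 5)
Your statement is Theorem~B of the paper, which is quoted from \cite[Theorem~1.1]{IW2021}: the present paper contains no proof of it, so your argument can only be measured against the cited source, not against an internal proof. Taken on its own merits, your proof is correct in structure and in all essential details, and it is the natural ``coefficient characterization of order'' route: the upper bound $\rho(y)\le\chi(\{a_n\})$ from the uniform estimate $|z^{\underline{n}}|\le\Gamma(r+n)/\Gamma(r)$ together with a maximal-term/Stirling analysis, and the lower bound $\rho(y)\ge\chi(\{a_n\})$ from the N\"orlund--Rice representation $a_n=\frac{1}{2\pi i}\oint_{|s|=R}\frac{y(s)}{s(s-1)\cdots(s-n)}\,ds$, valid for $R>n$ because the integral equals $\Delta^n y(0)/n!=a_n$, with the contour pushed out to the optimized radius $R\sim n^{1/(\rho'+\varepsilon)}$. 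You correctly identify that this contour freedom is the whole point: the naive bound $|a_n|\le 2^n M(n,y)/n!$ only yields $\chi\le 1$. This is precisely the falling-factorial analogue of the Lindel\"of--Pringsheim relation between Taylor coefficients and order, with the contour formula playing the role of Cauchy's estimate.

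Three places should be tightened. (i) In the upper bound, ``only polynomially many terms contribute near $n^{\ast}$'' is not literally an argument; replace it by splitting the sum at $n=r$: for $n\le r$ each term is at most $\exp\bigl(C r^{\sigma}\bigr)$ and there are at most $r+1$ of them, while for $n>r$ the terms are dominated by $\exp\bigl(n\log 2-(1/\sigma-1)\,n\log n\bigr)$, whose sum is $O(1)$ uniformly in $r$; this gives $\log M(r,y)\le C r^{\sigma}+O(\log r)$. (ii) Your radius choice requires $R>n$, i.e.\ $\rho(y)+\varepsilon<1$; this is available only because the upper bound $\rho(y)\le\chi(\{a_n\})<1$ has already been established, so the two halves must be carried out in that order --- state this explicitly, or the lower-bound argument looks circular. (iii) The identity behind the residue computation, $\Delta^n y(0)=n!\,a_n$, deserves one line of justification: since $k^{\underline{m}}=0$ for $m>k$, each value $y(k)$ is a \emph{finite} sum $\sum_{m\le k}a_m k^{\underline{m}}$, so no interchange of limits is needed; likewise note that the error in $\sum_{j=0}^{n}\log(R-j)=(n+1)\log R+O(n^2/R)$ is $O\bigl(n^{2-1/(\rho'+\varepsilon)}\bigr)=o(n)$, hence harmless. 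With these routine repairs your proof stands as a complete, self-contained substitute for the external reference.
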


We define the \emph{difference power function} as
    \begin{equation}\label{differencepower.eq}
    z^{\underline{\rho}}=\frac{\Gamma(z+1)}{\Gamma(z+1-\rho)}
    \end{equation}
for any $\rho\in\C$, which yields
    \begin{equation*}
    \begin{split}
    \Delta z^{\underline{\rho}}&=\frac{\Gamma(z+2)}{\Gamma(z+2-\rho)}-
    \frac{\Gamma(z+1)}{\Gamma(z+1-\rho)}
    =\left(\frac{z+1}{z+1-\rho}-1\right)\frac{\Gamma(z+1)}{\Gamma(z+1-\rho)}\\
    &=\rho\frac{\Gamma(z+1)}{(z+1-\rho)\Gamma(z+1-\rho)}
    =\rho z^{\underline{\rho-1}}.
    \end{split}
    \end{equation*}
When $\rho\in\N^+$, it is the definition of the falling factorial \eqref{1.3}. Consider the formal solution of \eqref{linear.eq} of the form
    \begin{equation}\label{frho.eq}
    f(z)=\sum_{n=0}^\infty a_nz^{\U{n+\rho}},
    \end{equation}
where $\rho\in\C$ and $a_n\in\C$ for $n=0,1,\ldots$. We note that
    $$
    z^{\U{n+\rho}}=\frac{\Gamma(z+1)}{\Gamma(z+1-n-\rho)}
    =\frac{\Gamma(z+1)}{\Gamma(z+1-\rho)}(z-\rho)^{\U{n}}=z^{\U{\rho}}(z-\rho)^{\U{n}}.
    $$
Therefore, the series \eqref{1.5} converges if and only if
the series \eqref{frho.eq} converges by
    $$
    f(z)=\sum_{n=0}^\infty a_nz^{\U{n+\rho}}=
    \sum_{n=0}^\infty a_nz^{\U{\rho}}(z-\rho)^{\U{n}}=
    z^{\U{\rho}}\sum_{n=0}^\infty a_n (z-\rho)^{\U{n}}.
    $$

The following lemma is about the product of falling factorial and difference power function.

\begin{lemma}\label{one.lemma}
For any $m\in\N$ and $\rho\in\C$, we have
    $$
    z^{\underline{m}}z^{\underline{\rho}}=\sum_{j=0}^m\binom{m}{j}
    \rho^{\underline{j}}z^{\underline{\rho+m-j}}.
    $$
\end{lemma}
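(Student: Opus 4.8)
The plan is to reduce the claim to two ingredients: the Chu--Vandermonde identity for falling factorials, and the product rule $z^{\underline{\rho}}(z-\rho)^{\underline{n}}=z^{\underline{n+\rho}}$ that is already recorded just above the lemma. First I would invoke the fact that, for any fixed nonnegative integer $m$,
$$
(x+y)^{\underline{m}}=\sum_{j=0}^m\binom{m}{j}x^{\underline{j}}y^{\underline{m-j}}
$$
holds for all $x,y\in\C$. This is an identity of polynomials in $x$ and $y$ of degree $m$; it is elementary for $x,y\in\N$ (either by a counting argument or by induction on $m$), and since both sides agree on infinitely many integer values it agrees identically, hence for arbitrary complex arguments.

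Next I would specialize $x=\rho$ and $y=z-\rho$, so that $x+y=z$. Here $\rho^{\underline{j}}$ and $(z-\rho)^{\underline{m-j}}$ are ordinary falling factorials, i.e. finite products of linear factors, so every term is unambiguously defined, and the Vandermonde identity yields
$$
z^{\underline{m}}=\sum_{j=0}^m\binom{m}{j}\rho^{\underline{j}}(z-\rho)^{\underline{m-j}}.
$$
Finally I would multiply through by $z^{\underline{\rho}}$ and apply the product rule $z^{\underline{\rho}}(z-\rho)^{\underline{n}}=z^{\underline{n+\rho}}$ with $n=m-j$ to collapse each summand:
$$
z^{\underline{m}}z^{\underline{\rho}}=\sum_{j=0}^m\binom{m}{j}\rho^{\underline{j}}\bigl[(z-\rho)^{\underline{m-j}}z^{\underline{\rho}}\bigr]=\sum_{j=0}^m\binom{m}{j}\rho^{\underline{j}}z^{\underline{\rho+m-j}},
$$
which is exactly the asserted formula.

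The only real point requiring care — and thus the main obstacle — is the legitimacy of using the Chu--Vandermonde identity with the complex quantity $\rho$ in place of an integer; I would handle this precisely by the polynomial-identity argument above, transferring the integer combinatorial identity to all complex $\rho$. If one wishes to avoid Chu--Vandermonde altogether, the lemma also admits a direct proof by induction on $m$: the case $m=0$ is immediate, and the inductive step multiplies the identity for $m$ by $(z-m)$, uses $z^{\underline{\sigma+1}}=(z-\sigma)z^{\underline{\sigma}}$ (a consequence of $\Gamma(z+1-\sigma)=(z-\sigma)\Gamma(z-\sigma)$) to write $(z-m)z^{\underline{\rho+m-j}}=z^{\underline{\rho+m+1-j}}+(\rho-j)z^{\underline{\rho+m-j}}$, and then recombines the two resulting sums by reindexing, Pascal's rule $\binom{m}{j}+\binom{m}{j-1}=\binom{m+1}{j}$, and $\rho^{\underline{j-1}}(\rho-j+1)=\rho^{\underline{j}}$. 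Either route is short; the Vandermonde version is the cleaner of the two.
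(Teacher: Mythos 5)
Your proposal is correct, but its primary route is genuinely different from the paper's. The paper proves the lemma by direct induction on $m$: the base case is $zz^{\underline{\rho}}=(z-\rho)z^{\underline{\rho}}+\rho z^{\underline{\rho}}=z^{\underline{\rho+1}}+\rho z^{\underline{\rho}}$, and the inductive step multiplies by $(z-k)$, expands $(z-k)z^{\underline{\rho+k-j}}=z^{\underline{\rho+k-j+1}}+(\rho-j)z^{\underline{\rho+k-j}}$, reindexes, and applies Pascal's rule --- which is precisely the fallback argument you sketch in your final paragraph. Your main argument instead deduces the lemma from the Chu--Vandermonde convolution $(x+y)^{\underline{m}}=\sum_{j=0}^m\binom{m}{j}x^{\underline{j}}y^{\underline{m-j}}$, extended from integer to complex arguments as a polynomial identity, specialized at $x=\rho$, $y=z-\rho$, and then multiplied by $z^{\underline{\rho}}$ via the factorization $z^{\underline{\rho}}(z-\rho)^{\underline{n}}=z^{\underline{n+\rho}}$ that the paper records immediately before the lemma; every step checks out, so the proof is valid. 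What your route buys: it explains the shape of the formula (the lemma is exactly Vandermonde's identity transported by the Gamma-function factorization) and it isolates where the complex parameter $\rho$ enters, namely only through polynomial extension. What the paper's route buys: it is entirely self-contained, using nothing beyond Pascal's rule and the recursion $z^{\underline{\sigma+1}}=(z-\sigma)z^{\underline{\sigma}}$. One small sharpening for your extension step: since the identity involves two variables, you should argue one variable at a time (fix $y\in\N$ and conclude equality of polynomials in $x$, then fix $x\in\C$ arbitrary and vary $y\in\N$); ``agreement on infinitely many integer values'' is literally sufficient only in one variable, though the intended argument is standard and sound.
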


\begin{proof}
We prove this lemma by induction. When $m=1$, it is
    $$
    zz^{\underline{\rho}}=(z-\rho)z^{\underline{\rho}}+\rho z^{\U{\rho}}=(z-\rho)\frac{\Gamma(z+1)}{\Gamma(z+1-\rho)}
    +\rho z^{\U{\rho}}=z^{\U{\rho+1}}+\rho z^{\U{\rho}}.
    $$
We assume that the assertion is true when $m=k$. Keeping in mind that
    $$
    \binom{p}{q}+\binom{p}{q-1}=\binom{p+1}{q}
    $$
for any $p,q\in\N^+$ and $p\leq q$, which is called Pascal's rule. If $m=k+1$, then
    \begin{equation*}
    \begin{split}
    z^{\U{k+1}}z^{\U{\rho}}&=(z-k)\sum_{j=0}^k\binom{k}{j}\rho^{\U{j}}
    z^{\U{\rho+k-j}}=\sum_{j=0}^k\binom{k}{j}\rho^{\U{j}}\left(z^{\U{\rho+k-j+1}}
    +(\rho-j)z^{\rho+k-j}\right)\\
    &=\sum_{j=0}^k\binom{k}{j}\rho^{\U{j}}z^{\U{\rho+k-j+1}}
    +\sum_{j=0}^k\binom{k}{j}\rho^{\U{j+1}}z^{\U{\rho+k-j}}\\
    &=z^{\U{\rho+k+1}}+\sum_{j=1}^k\binom{k}{j}\rho^{\U{j}}z^{\U{\rho+k-j+1}}
    +\sum_{j=1}^k\binom{k}{j-1}\rho^{\U{j}}z^{\U{\rho+k-j+1}}
    +\rho^{\U{k+1}}z^{\rho}\\
    &=z^{\U{\rho+k+1}}+\sum_{j=1}^k\binom{k+1}{j}\rho^{\U{j}}z^{\U{\rho+k-j+1}}
     +\rho^{\U{k+1}}z^{\rho}\\
     &=\sum_{j=0}^{k+1}\binom{k+1}{j}\rho^{\U{j}}z^{\U{\rho+k-j+1}}.
    \end{split}
    \end{equation*}
Therefore, we prove our assertion.
\end{proof}

\section{Asymptotic behavior of solutions of linear difference equations}\label{ams.sec}

In this section, we recall the fundamental theorems on the
asymptotic behavior of solutions of linear difference equations. It is widely accepted among researchers in difference equations that the theorem of Poincar\'e \cite{Poincare} makes the beginning of research in the qualitative theory of linear difference equations, see \cite[Section 8.2]{Elaydi-2005} or \cite[Section 5.3]{KP2001}. In 1885 the French mathematician H. Poincar\'e studied the linear difference equations of the form
    \begin{equation}\label{Poincare.eq}
    x(n+k)+p_1(n)x(n+k-1)+\cdots+p_k(n)x(n)=0
    \end{equation}
such that there are real numbers $p_i$, $1\leq i\leq k$, $k\in\N^+$, with
    \begin{equation}\label{Poincare condition}
    \lim_{n\to\infty} p_i(n)=p_i, \quad\quad 1\leq i\leq k.
    \end{equation}
An equation of the form \eqref{Poincare.eq} with the condition \eqref{Poincare condition} is called a difference equation of \emph{Poincar\'e type}. The characteristic equation associated with \eqref{Poincare.eq} is
    \begin{equation}\label{PoinCh.eq}
    \lambda^k+p_1\lambda^{k-1}+\cdots+p_k=0.
    \end{equation}

\bigskip
\noindent
\textbf{Poincar\'e's Theorem}. \emph{Suppose that condition \eqref{Poincare condition} holds and the characteristic roots $\lambda_1,\lambda_2,\ldots,\lambda_k$ of \eqref{PoinCh.eq} have distinct modulus. If $x(n)$ is a solution of \eqref{Poincare.eq}, then either $x(n)=0$ for all large $n$ or
    \begin{equation}\label{asy.eq}
    \lim_{n\to\infty}\frac{x(n+1)}{x(n)}=\lambda_i
    \end{equation}
for some $i$, $1\leq i\leq k$.}

\bigskip

Note that Poincar\'s Theorem does not tell us whether or not each characteristic root $\lambda_i$ can be written in the form \eqref{asy.eq}. In 1909 O. Perron in \cite{Perron1909} gave an affirmative answer to this question.

\bigskip
\noindent
\textbf{Perron's First Theorem}. \emph{Suppose that $p_k(n)\neq 0$ for all $n\in\N^+$ and the assumptions of Poincar\'e's Theorem hold. Then a fundamental set of solutions $\{x_1(n),x_2(n),\ldots,x_k(n)\}$ of \eqref{Poincare.eq} satisfies the property
    $$
    \lim_{n\to\infty}\frac{x_i(n+1)}{x_i(n)}=\lambda_i, \quad 1\leq i\leq k.
    $$}

Later Perron in \cite{Perron1921} proved an asymptotic result of a different type, which is true without any restriction on the roots of characteristic equation \eqref{PoinCh.eq}.

\bigskip
\noindent
\textbf{Perron's Second Theorem} \emph{Suppose that $p_k(n)\neq 0$ for all $n\in\N^+$ and \eqref{PoinCh.eq} holds. Then \eqref{Poincare condition} has a fundamental set of solutions $\{x_1(n),x_2(n),\ldots,x_k(n)\}$ of \eqref{Poincare.eq} with the property
    $$
    \limsup_{n\to\infty}\sqrt[n]{|x_i(n)|}=\lambda_i, \quad 1\leq i\leq k.
    $$}

In 2002 Pituk \cite{Pituk} obtained a result, similarly to the situation in Poincar\'e theorem, without assumptions on $p_k(n)\neq 0$.

\bigskip
\noindent
\textbf{Pituk's Theorem} \emph{Suppose the condition \eqref{Poincare condition} holds. If $x(n)$ is a solution of \eqref{Poincare.eq}, then either $x(n)=0$ for all large $n$ or
    $$
    \limsup_{n\to\infty}\sqrt[n]{|x(n)|}=\lambda
    $$
is equal to the modulus of one of the roots of the characteristic equation \eqref{PoinCh.eq}.}

\bigskip

The remarkable paper on the asymptotic behaviour of solutions is due to C.R. Adams \cite{Adams}. We obtain the main term of formal series solutions by the results in his paper. We focus on the asymptotic behaviour of solutions of linear difference equations
    \begin{equation}\label{LDE.eq}
    p_0(n)x(n+k)+p_1(n)x(n+k-1)+\cdots+p_k(n)x(n)=0
    \end{equation}
whose coefficient functions are expressible in the form
    $$
    p_i(n)=a_{i0}+a_{i1}n^{-1}+a_{i2}n^{-2}+\cdots
    $$
for all large $n$ and whose characteristic equation is
    \begin{equation}\label{LEDch.eq}
    a_{00}\lambda^k+a_{10}\lambda^{k-1}+\cdots+a_{k-1,0}\lambda+a_{k0}=0.
    \end{equation}
It is known that there are $k$ linearly independent solutions of \eqref{LDE.eq}. Let us just summarize Adams' results on calculating main terms of formal solutions as follows, which is enough for us to consider the behaviour of binomial series in Section 5.

\textbf{Case 1 ($a_{00}\neq 0$ and $a_{k0}\neq 0$)}. Corresponding to a root $\lambda$ of \eqref{LEDch.eq} with multiplicity $m\geq 1$, there are $m$ linearly independent formal series of the following type
    \begin{equation}\label{regular.eq}
    x(n)\sim\lambda^n n^re^{L(n)},
    \end{equation}
where $r$ is a constant, $L(n)$ is a polynomial of $n^{1/j}$
with coefficients being different in the different series of degree at most $j-1$ for some $j\leq m$. Obviously, there are $k$ linearly independent formal solutions of the form \eqref{regular.eq} corresponding to the characteristic equations
\eqref{LEDch.eq}.

\textbf{Case 2 (one or both $a_{00},a_{k0}=0$)}. Let us denote by $a_{i,j_i}$ the first nonzero coefficients in $p_i(n)$ ($i=0,1,\ldots,k$), and choosing $i-$ and $j-$ axes, plot the points $(i,j_i)$ as in Figure 1. Construct broken lines $L$, convex upward, such that both ends of each segment of the line are points of the set $(i,j_i)$ and such that all points of the set lie upon or beneath the line, see e.g., \cite[P. 511]{Adams}. Let the slope of any such segment $L$ be $\mu$, and the number of points $(i,j_i)$ that lie on or beneath that segment $L$ is $\nu$. Then the degree of the characteristic equation associated with that segment $L$ is $\nu-1$. There are $\nu-1$ linearly independent formal solutions associated with the segment $L$ of the following type
    \begin{equation}\label{irregular}
    x(n)\sim n^{-\mu n}e^{\mu n}\lambda^n e^{L(n)}n^r,
    \end{equation}
where $\lambda$ is a root of the characteristic equation associated with that segment $L$, $r$ is a constant, and $L(n)$ is a polynomial of $n^{1/j}$
with coefficients being different in the different series of degree at most $j-1$ for some $j\leq \nu-1$. Evidently the sum
of the degrees of these several characteristic equations is $k$. Obviously, there are $k$ linearly independent formal solutions of the form \eqref{irregular} of difference equations \eqref{LDE.eq}.

\begin{figure}[H]\label{Line.fi}
 \begin{center}
    \begin{tikzpicture}[scale=0.8]
    \draw[->](-2,0)--(13,0)node[left,below]{$i$};
    \draw[->](0,1)--(0,-8)node[left]{$j$};
    \draw[-](0,0)--(0,0)node[left=6pt,above]{$0$};
    \draw[-,thick](0,-7)--(1,-4)node[left,below=8pt]{$L$};
    \draw[-,thick](0,-7)--(0,-7)node[below,right]{\small{$(0,j_0)$}};
    \draw[-,thick](1,-4)--(2,-2);
    \draw[-,thick](2,-2)--(4,0);
    \draw[thick](0,-7)node{\blue{$\bullet$}};
    \draw[thick](1,-4)node{\blue{$\bullet$}};
    \draw[thick](2,-2)node{\blue{$\bullet$}};
    \draw[thick](4,0)node{\blue{$\bullet$}};
    \draw[thick](4,0)node{\blue{$\bullet$}};
    \draw[thick](3,-3)node{\blue{$\bullet$}};
    \draw[thick](5,-2)node{\blue$\bullet$};
    \draw[thick](6,0)node{\blue{$\bullet$}};
    \draw[thick](7,0)node{\blue{$\bullet$}};
    \draw[thick](8,-1)node{\blue{$\bullet$}};
    \draw[thick](9,-3)node{\blue{$\bullet$}};
    \draw[thick](10,-6)node{\blue{$\bullet$}};
    \draw[draw=red,dashed](1,-4)--(1,0);
    \draw[draw=red,dashed](2,-2)--(2,0);
    \draw[draw=red,dashed](8,-1)--(8,0);
    \draw[draw=red,dashed](9,-3)--(9,0);     \draw[draw=red,dashed](10,-6)--(10,0);
    \draw[dashed](3,-3)--(2,-2);
    \draw[dashed](3,-3)--(4,0);
    \draw[dashed](4,0)--(5,-2);
    \draw[dashed](5,-2)--(7,0);
    \draw[-,thick](4,0)--(7,0);
    \draw[-,thick](7,0)--(8,-1);
    \draw[-,thick](8,-1)--(9,-3);
    \draw[-,thick](9,-3)--(10,-6)node[below,right]{\small{$(k,j_k)$}};
    \end{tikzpicture}
    \end{center}
	\begin{quote}
    \caption{Convex curve of linear difference equations}
	\end{quote}
    \end{figure}
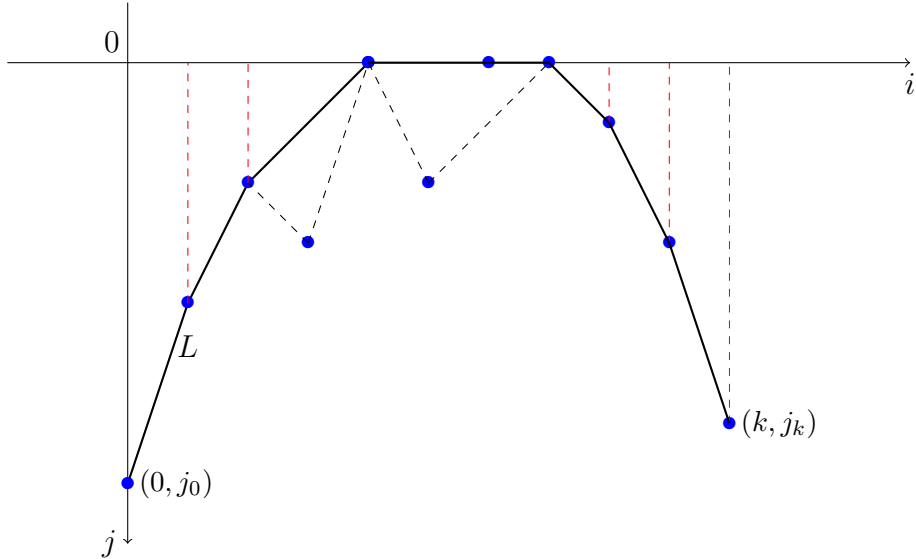

We note here that asymptotic behaviour of solutions of difference equations \eqref{Poincare.eq} of the form \eqref{regular.eq} or \eqref{irregular} is enough for us in this paper, but it is not enough for asymptotic analysis.
In fact, there are lots of paper to show how to calculate
 coefficients of formal series solutions. We refer to \cite{Adams}, \cite{Birkhoff1911, Birkhoff1930}, \cite{BT1933} for more details, and \cite{Wong-Li} for second order case.

\section{Proof of main theorems}

We assume that there exists a formal solution of \eqref{linear.eq} of the following form
    $$
    f(z)=\sum_{n=0}^\infty a_nz^{\U{n}}.
    $$
Obviously, for any given $j>0$, we have
    $$
    \Delta^j f(z)=\sum_{n=0}^\infty a_{n+j}(n+j)^{\U{j}}z^{\U{n}}.
    $$
For any given $j$, $j=0,1,\ldots,m$, we suppose that the polynomial $P_j$ is of the form
    $$
    P_j(z)=A_{j,d_j}z^{\U{d_j}}+A_{j,d_j-1}z^{\U{d_j-1}}
    +\cdots+A_{j,1}z+A_{j,0},
    $$
where $A_{j,i}\in\C$ for $0\leq i\leq d_j$, and $A_{j,d_j}\neq 0$. Therefore, it follows from Lemma \ref{one.lemma} that
    \begin{align*}
    P_j(z)\Delta^j f(z)&=\left(\sum_{t=0}^{d_j}A_{j,t}z^{\U{t}}\right)\left(\sum_{n=0}^\infty a_{n+j}(n+j)^{\U{j}}z^{\U{n}}\right)\\
    &=\sum_{n=0}^\infty \sum_{t=0}^{d_j}\sum_{k=0}^t
    a_{n+j}A_{j,t}\binom{t}{k}(n+j)^{\U{k+j}}z^{\U{n+t-k}}.
    \end{align*}
Hence, it yields from \eqref{linear.eq} that
    \begin{equation}\label{sum.eq}
    \sum_{j=0}^m\sum_{n=0}^\infty \sum_{t=0}^{d_j}\sum_{k=0}^t
    a_{n+j}A_{j,t}\binom{t}{k}(n+j)^{\U{k+j}}z^{\U{n+t-k}}=0.
    \end{equation}
We set $i=t-k$ in \eqref{sum.eq} and obtain that
    \begin{align*}
    &\sum_{j=0}^m\sum_{n=0}^\infty a_{n+j} (n+j)^{\U{j}} \sum_{t=0}^{d_j}\sum_{k=0}^t A_{j,t}\binom{t}{k}n^{\U{k}}z^{\U{n+t-k}}\\
    =&\sum_{j=0}^m\sum_{n=0}^\infty a_{n+j}(n+j)^{\U{j}}
    \sum_{t=0}^{d_j}\sum_{i=0}^tA_{j,t}
    \frac{\Delta^i(n^{\U{t}})}{i!}z^{\U{n+i}}\\
    =&\sum_{j=0}^m\sum_{n=0}^\infty a_{n+j}(n+j)^{\U{j}}
    \sum_{i=0}^{d_j}\sum_{t=i}^{d_j}
    \frac{\Delta^i(A_{j,t} n^{\U{t}})}{i!}z^{\U{n+i}}\\
    =&\sum_{j=0}^m\sum_{n=0}^\infty a_{n+j}(n+j)^{\U{j}}
    \sum_{i=0}^{d_j}\frac{\Delta^i(P_j(n))}{i!}z^{\U{n+i}}=0.
    \end{align*}
Now, we assume that $d=\max_{0\leq j\leq m}\{d_j\}$, and reduce that
    \begin{align*}
    &\sum_{j=0}^m\sum_{n=0}^\infty\sum_{i=0}^{d}
    a_{n+j}(n+j)^{\U{j}}\frac{\Delta^i(P_j(n))}{i!}z^{\U{n+i}}\\
    =&\sum_{j=0}^m\sum_{i=0}^{d}\sum_{n=0}^\infty a_{n+j}(n+j)^{\U{j}}\frac{\Delta^i(P_j(n))}{i!}z^{\U{n+i}}\\
    =&\sum_{j=0}^m\sum_{i=0}^{d}\sum_{n=i}^\infty
    a_{n-i+j}(n-i+j)^{\U{j}}\frac{\Delta^i(P_j(n-i))}{i!}z^{\U{n}}\\
    =&\sum_{j=0}^m\left(\sum_{n=0}^{d-1}\sum_{i=0}^n+
    \sum_{n=d}^\infty\sum_{i=0}^d\right)
    a_{n-i+j}(n-i+j)^{\U{j}}\frac{\Delta^i(P_j(n-i))}{i!}z^{\U{n}}\\
    =&\left(\sum_{n=0}^{d-1}\sum_{i=0}^n+    \sum_{n=d}^\infty\sum_{i=0}^d\right)\sum_{j=0}^m
     a_{n-i+j}(n-i+j)^{\U{j}}\frac{\Delta^i(P_j(n-i))}{i!}z^{\U{n}}=0.
    \end{align*}
Since $z^{\U{n}}$ are linearly independent over the periodic field with period one for distinct $n$, thus
    \begin{equation}\label{np1.eq}
    \sum_{i=0}^n\sum_{j=0}^m
     a_{n-i+j}(n-i+j)^{\U{j}}\frac{\Delta^i(P_j(n-i))}{i!}=0
     \quad\text{for}\quad n< d,
     \end{equation}
and
    \begin{equation}\label{np2.eq}
    \sum_{i=0}^d\sum_{j=0}^m
     a_{n-i+j}(n-i+j)^{\U{j}}\frac{\Delta^i(P_j(n-i))}{i!}=0
     \quad\text{for}\quad n\geq d.
     \end{equation}
We consider the asymptotic behaviour of $a_n$ for large $n$ from \eqref{np2.eq} and ignore \eqref{np1.eq}. We assume that $\Delta^\alpha f(z)=0$ for $\alpha<0$. Therefore, we write \eqref{np2.eq} as
    \begin{align*}
   &\sum_{i=0}^d\sum_{j=0}^ma_{n-i+j}(n-i+j)^{\U{j}}\frac{\Delta^i(P_j(n-i))}{i!}\\
   =&\sum_{j=0}^m\sum_{i=0}^da_{n-i+j}(n-i+j)^{\U{j}}\frac{\Delta^i(P_j(n-i))}{i!}\\
   =&\sum_{j=0}^m\sum_{t=-j}^{d-j}a_{n-t}(n-t)^{\U{j}}\frac{\Delta^{t+j}(P_j(n-j-t))}{(t+j)!}\\
   =&\sum_{j=0}^m\sum_{i=-m}^{d}a_{n-i}(n-i)^{\U{j}}\frac{\Delta^{i+j}(P_j(n-j-i))}{(i+j)!}\\
   =&\sum_{i=-m}^{d}a_{n-i}\sum_{j=0}^m(n-i)^{\U{j}}\frac{\Delta^{i+j}(P_j(n-j-i))}{(i+j)!}.
    \end{align*}
Let us set
    $$
    Q(n,i)=\sum_{j=0}^m\frac{(n-i)^{\U{j}}}{(i+j)!}\Delta^{i+j}(P_j(n-j-i))
    $$
for $i=-m,\ldots,d$. We see that $a_n$ satisfies a recurrence relation of $m+d$ order
    \begin{equation}\label{recu.eq}
    a_{n+m}Q(n,-m)+a_{n+m-1}Q(n,-m+1)+\cdots+a_{n-d}Q(n,d)=0.
    \end{equation}
In the following, we proceed to seek for the linearly independent asymptotic solutions of \eqref{recu.eq}.
The proofs of theorems heavily depend on asymptotic behaviour of solutions of linear difference equations.

\subsection{Proof of Theorem \ref{one.theorem}}

Suppose that $p=1$. Since $d_{s_1}\geq d_k$ for
$k\geq s_1$ and $d_{s_1}> d_k$ for
$0\leq k< s_1$, we have
    \begin{equation}\label{Qnk.eq}
    \deg{Q(n,k)}\leq d_{s_{1}}-k~~\quad\text{for}\quad -m\leq k<d_{s_1}-s_{1}
    \end{equation}
and $\deg{Q(n,d_{s_1}-s_1)}=s_1$, and
    $$
    Q(n,k)=0 ~\quad\quad\quad\quad\text{for}\quad d_{s_1}-s_1<k\leq d.
    $$
We rewrite \eqref{recu.eq} as
    \begin{equation}\label{recu2.eq}
     a_{n+m}Q(n,-m)+a_{n+m-1}Q(n,-m+1)+\cdots+a_{n-(d_{s_1}-s_1)}Q(n,d_{s_1}-s_1)=0.
    \end{equation}
 We denote by an index $\xi$ satisfying $-m\leq \xi\leq d$ such that $\deg{Q(n,\xi)}>\deg{Q(n,k)}$ for $\xi>k$ and
$\deg{Q(n,\xi)}\geq\deg{Q(n,k)}$ for $-m\leq k\leq d$.
It is obvious that $\xi\geq -s_1$. By divided $Q(n,\xi)$
in both side of \eqref{recu2.eq}, let us write \eqref{recu2.eq} as
    \begin{equation}\label{recu3.eq}
    a_{n+m}A_0(n)+a_{n+m-1}A_1(n)+\cdots+a_{n-(d_{s_1}-s_1)}A_{m+d_{s_1}-s_1}(n)=0,
    \end{equation}
where
    $$
    A_i(n)=A_{i,0}+A_{i,1}n^{-1}+A_{i,2}n^{-2}+\cdots
    $$
for $i=0,1,\ldots,m+d_{s_1}-s_1$. Let us denote by $A_{i,j_i}$ the first nonzero coefficient in $A_i$
for $i=0,1,\ldots,m+d_{s_1}-s_1$. Choosing $i$ and $j$ axes, we plot the points $(i,j_i)$.
Construct a segment $L$, convex, such that both ends of the line are points of the set $(i,j_i)$ and such that all points of the set lie upon or beneath the line.
Obviously, the number of these segments are not greater than $m+d_{s_1}-s_1$. The segments are denoted by $L_1,L_2,\ldots$, respectively, and slopes of such segments are denoted by $\mu_1,\mu_2,\ldots$, which are rational numbers. The degrees of characteristic equations corresponding to these segments are denoted by $\nu_1,\nu_2,\ldots$. In addition, the point $(0,j_0)$ is the beginning point and the point $(m+d_{s_1}-s_1,j_{m+d_{s_1}-s_1})$ is the ending point from left to right, see Figure 1.

From the construction of the convex lines, we see that
$\mu_t$ is increasing for $t$, and
    $$
    j_i=\deg{Q(n,\xi)}-\deg{Q(n,i-m)}
    $$
for $i=0,1,\ldots,m+d_{s_1}-s_1$. Let us suppose that the last segment is denoted by $L_\chi$ which connects the points
$(t,j_t)$ and $(m+d_{s_1}-s_1,j_{m+d_{s_1}-s_1})$. Therefore, it follows from \eqref{Qnk.eq} that
    \begin{align*}
    \mu_\chi&=\frac{\deg{Q(n,t-m)}-\deg{Q(n,d_{s_1}-s_1)}}{(m+d_{s_1}-s_1)-t}\\
    &\leq \frac{(d_{s_1}-(t-m))-s_1}{(m+d_{s_1}-s_1)-t}\\
    &=1.
    \end{align*}
It implies that all the slopes of segments are no greater than 1. By \cite{Adams}, for any $j=1,2,\ldots,\chi$, we know there are $\nu_j$ linearly independent formal solution of \eqref{recu3.eq} corresponding to the segment $L_j$ of the form
    $$
    a_{n,j}\sim n^{-\mu_j n}e^{\mu_j n}\lambda_j^ne^{L_j(n)}n^{r_j}\quad\quad\text{as}\quad n\to\infty,
    $$
where $\lambda_j$ is a root of the characteristic equation associated with that segment $L_j$, $r_j$ is a constant, and $L_j(n)$ is a polynomial of $n^{1/j}$
with coefficients being different in the different series of degree at most $j-1$ for some $j\leq \nu_j$. Hence, we reduce
    $$
    \chi(\{a_n,j\})=\limsup_{n\to\infty}\frac{n\log n}{-\log|a_{n,j}|}=\frac{1}{\mu_j}\geq 1
    $$
for $j=1,2,\ldots,\chi$. From Theorem~\ref{Convergence} and \cite[Corollary A. 1]{IW2021}, we have $f$ cannot be entire of order less than 1. We prove our assertion. \hfill $\square$

\subsection{Proof of Theorem \ref{two.theorem}}

In order to prove Theorems \ref{two.theorem}, we proceed to seek for all linearly independent asymptotic solutions of \eqref{recu.eq} by the method in \cite{Adams}. Since the relation \eqref{s1.eq} to \eqref{sj2.eq} hold, we have
    \begin{equation}\label{Qdegree.eq}
    \begin{split}
    \deg{Q(n,k)}&\leq d_{s_{1}}-k~~\quad\text{for}\quad -m\leq k<d_{s_1}-s_{1};\\
    \deg{Q(n,k)}&=s_j\quad\quad\quad\quad\text{for}\quad k=d_{s_j}-s_j;\\
    \deg{Q(n,k)}&\leq d_{s_{j+1}}-k\quad\text{for}\quad d_{s_j}-s_j<k<d_{s_{j+1}}-s_{j+1};\\
    Q(n,k)&=0 ~\quad\quad\quad\quad\text{for}\quad d_{s_p}-s_p<k\leq d.
    \end{split}
    \end{equation}
We rewrite \eqref{recu.eq} as
    \begin{equation}\label{recu4.eq}
     a_{n+m}Q(n,-m)+a_{n+m-1}Q(n,-m+1)+\cdots+a_{n-(d_{s_p}-s_p)}Q(n,d_{s_p}-s_p)=0.
    \end{equation}
It is well known that there are $m+d_{s_p}-s_p$ linearly independent solutions of \eqref{recu4.eq}.
Let us set
    $$
   a_{n+m}=\frac{x(n)}{[(n+m)!]^\mu},
    $$
where $\mu\in\R$. Then $x(n)$ satisfies the recurrence relation
    \begin{equation}\label{xrelation.eq}
    x(n)T(n,\mu,0)+x(n-1)T(n,\mu,1)+\cdots+x(n-(d_{s_p}-s_p))T(n,\mu,d_{s_p}-s_p)=0,
    \end{equation}
where
    $$
    T(n,\mu,i)=Q(n,i-m)[(n+m)^{\U{i}}]^\mu
    $$
for $i=0,1,\ldots,m+d_{s_p}-s_p$. The highest power on $n$ of $T(n,\mu,i)$ is denoted by $\deg{T(n,\mu,i)}$. Obviously,
    $$
    \deg{T(n,\mu,i)}=\deg{Q(n,i-m)}+i\mu
    $$
for $i=0,1,\ldots,m+d_{s_p}-s_p$.
Since we assume that $p\geq 2$, for given $j=1,2,\ldots,p-1$ and for any $\mu>1$,we deduce from \eqref{Qdegree.eq} that
    \begin{equation}\label{Tdegree.eq}
    \begin{split}
    &\deg{T(n,\mu,m+d_{s_{j+1}}-s_{j+1})}>\deg{T(n,i)}~~\text{for}\quad m+d_{s_j}-s_j<i<m+d_{s_{j+1}}-s_{j+1};\\
    &\deg{T(n,\mu,m+d_{s_1}-s_1})>\deg{T(n,i)} ~\quad\quad\text{for}\quad 0\leq i<m+d_{s_1}-s_{1}.\\
    \end{split}
    \end{equation}
Now let us choose $\mu_j=1/\rho_j$ for $j=1,2,\ldots,p-1$, we have
    \begin{equation}\label{Teq.eq}
    \deg{T(n,\mu_j,m+d_{s_j}-s_j)}=\deg{T(n,\mu_j,m+d_{s_{j+1}}-s_{j+1})}
    \end{equation}
for any $j=1,2,\ldots,p-1$. Moreover, if $p\geq 3$, it yields that for any given $j=1,2,\ldots,p-1$
    \begin{equation}\label{Tbig.eq}
    \begin{split}
    &\deg{T(n,\mu_j,m+d_{s_{j}}-s_{j})}-\deg{T(n,\mu_j,m+d_{s_k}-s_k)}\\
    =&s_{j}+(m+d_{s_{j}}-s_{j})\mu_j-s_k-(m+d_{s_k}-s_k)\mu_j\\
    =&(s_j-s_k)+(d_{s_j}-s_j+s_k-d_{s_k})\mu_j\\
    =&(s_j-s_k)\left(1-\left(\frac{d_{s_j}-d_{s_k}}{s_k-s_j}+1\right)\mu_j\right)\\
    >&(s_j-s_k)\left(1-\left(\frac{d_{s_j}-d_{s_{j+1}}}{s_{j+1}-s_j}+1\right)\mu_j\right)\\
    =&(s_j-s_k)(1-\mu_j/\mu_{j})\\
    =&0
    \end{split}
    \end{equation}
when $j+1<k\leq p$. Thus, from \eqref{Tdegree.eq} to
\eqref{Tbig.eq}, we deduce that for a given $j=1,2,\ldots,p-1$,
    $$
     \deg{T(n,\mu_j,m+d_{s_j}-s_j)}=\deg{T(n,\mu_j,m+d_{s_{j+1}}-s_{j+1})}
     >\deg{T(n,i)}
    $$
holds for $i\neq m+d_{s_j}-s_j$ and $i\neq m+d_{s_{j+1}}-s_{j+1}$. According to \cite{Adams}, the characteristic equation for $\mu_j$ is
    \begin{equation}\label{ch.eq}
    A\gamma^{m+d_{s_j}-s_j}+B\gamma^{m+d_{s_{j+1}}-s_{j+1}}=0,
    \end{equation}
where $A$ and $B$ are constants. There exist $(d_{s_{j+1}}-s_{j+1})-(d_{s_j}-s_j)$ nonzero simple roots of equation \eqref{ch.eq}, which are denoted by $\gamma_{j,t}$ for $t=1,2,\ldots,(d_{s_{j+1}}-s_{j+1})-(d_{s_j}-s_j)$.
Then for a give $j=1,2,\ldots, p-1$, we find at most $(d_{s_{j+1}}-s_{j+1})-(d_{s_j}-s_j)$ linearly independent solutions of \eqref{recu4.eq} of asymptotic behaviour as
    $$
    a_n^{(j,t)}\sim n^{-\mu_jn}e^{\mu_jn}\gamma_{j,t}^ne^{L_j(n)}n^{r_j},
    $$
where $r_j$ are constants and $L_j(n)$ are polynomials in $n^{1/j}$. Moreover, for a given $j=1,2,\ldots,p-1$, we have
    $$
    \chi(\{a_n^{(j,t)}\})=\limsup_{n\to\infty}\frac{n\log n}{-\log|a_n^{(j,t)}|}=\frac{1}{\mu_j}< 1.
    $$
Form Theorem~\ref{Convergence}, for a give $j=1,2,\ldots, p-1$, we find at most $(d_{s_{j+1}}-s_{j+1})-(d_{s_j}-s_j)$ linearly independent entire solutions of order $\rho_j=1/\mu_j<1$ of \eqref{linear.eq}.

We mention here that there exist at most $\sum_{j=1}^{p-1}[(d_{s_{j+1}}-s_{j+1}))-(d_{s_j}-s_j)]$ transcendental entire solutions of order less than one under our assumption. By (2.1) to (2.4), we have
$$
\sum_{j=1}^{p-1} [(d_{s_{j+1}}-s_{j+1}))-(d_{s_j}-s_j)]=(s_1-s_p)-(d_{s_1}-d_{s_p})<m.
$$
Hence other $m-[(s_1-s_p)-(d_{s_1}-d_{s_p})]$ linearly independent solutions over periodic field with period one of \eqref{linear.eq} can not be entire of order less than 1.
The method and tools are similar as the proof of Theorem 2.1, we omit the details here. Therefore, we prove our assertion of Theorem 2.2. \hfill $\square$

\section{Entire solution of rational order between 0 and 1}

In this section, we will construct a linear difference equation \eqref{linear.eq} with polynomial coefficients, which has an entire solution of rational order between 0 and 1.

\begin{theorem}\label{construct.theorem}
For any positive rational number $\lambda<1$, there exists
a linear difference equation \eqref{linear.eq} with polynomial coefficients such that $f$ is an entire solution of \eqref{linear.eq} of order $\rho(f)=\lambda$.
\end{theorem}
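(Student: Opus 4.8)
The plan is to reduce the construction to Theorem~\ref{two.theorem}: for each rational $\lambda\in(0,1)$ I will exhibit an explicit equation of the form \eqref{linear.eq} whose associated sequence \eqref{sequence.eq} has length exactly $p=2$ and whose single order $\rho_1$ from \eqref{orderlist.eq} equals $\lambda$. Once such an equation is in hand, Theorem~\ref{two.theorem} immediately supplies at least one transcendental entire solution $f$ with $\rho(f)=\rho_1=\lambda$, which is precisely the assertion. Thus the theorem is really a ``design'' problem for the Newton-polygon data, with all the analytic content already packaged in Theorem~\ref{two.theorem}.

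First I would solve that data backwards. Writing $\lambda=a/b$ in lowest terms with $0<a<b$, formula \eqref{orderlist.eq} with $p=2$ reads $\rho_1=1+(d_{s_2}-d_{s_1})/(s_1-s_2)$, so it suffices to arrange $s_1-s_2=b$ and $d_{s_1}-d_{s_2}=b-a$. The natural choice is $s_2=0$, $s_1=b$, $d_{s_2}=0$, $d_{s_1}=b-a$. Concretely I would take the order-$m=b$ equation
\begin{equation*}
P_b(z)\Delta^b f(z)+P_{b-1}(z)\Delta^{b-1}f(z)+\cdots+P_1(z)\Delta f(z)+P_0(z)f(z)=0,
\end{equation*}
with $\deg P_b=b-a$ (for instance $P_b(z)=z^{\U{b-a}}$) and with $P_0,\dots,P_{b-1}$ nonzero constants, so that $\deg P_k=0$ for $0\le k<b$. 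The introductory example $(4z+6)\Delta^2 f+3\Delta f+f=0$ is exactly this construction for $\lambda=1/2$ (so $a=1$, $b=2$, $\deg P_2=b-a=1$), which is a useful sanity check.

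Next I would verify directly that this equation has $p=2$ and $\rho_1=\lambda$ by checking the defining conditions \eqref{s1.eq}--\eqref{sj2.eq}. Since $\deg P_b=b-a\ge 1$ strictly exceeds the degree $0$ of every other coefficient, $s_1=b$ satisfies \eqref{s1.eq}. Among the indices $0\le k<s_1$ the maximal degree is $0$, attained first at $k=0$, and the falling condition \eqref{sj.eq} holds because $d_0-0=0>-a=d_b-b$; hence $s_2=0$ satisfies \eqref{sj2.eq}. Finally $\max_{0\le k<s_2}d_k$ is an empty maximum, so no $s_3$ exists and $p=2$. Substituting into \eqref{orderlist.eq} gives $\rho_1=1+\bigl(0-(b-a)\bigr)/(b-0)=a/b=\lambda$, and Theorem~\ref{two.theorem} then completes the proof.

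The construction is essentially routine, so the only points needing care are the strict inequalities hidden in \eqref{s1.eq} and \eqref{sj2.eq} (ensuring the maxima are attained at the intended \emph{unique} indices, and that $p$ is exactly $2$ rather than $1$ or larger — in particular that $P_0$ is a nonzero constant so that $s_2=0$ and not some larger index, which would alter the slope). If instead a self-contained, explicitly computable solution is wanted rather than an appeal to Theorem~\ref{two.theorem}, I would substitute $f(z)=\sum_{n\ge 0}a_n z^{\U n}$, derive the recurrence \eqref{recu.eq} for this particular equation, read off from the associated two-term characteristic equation \eqref{ch.eq} (whose exponent gap here is $a$) the asymptotics $a_n\sim n^{-\mu_1 n}e^{\mu_1 n}\gamma^n e^{L(n)}n^{r}$ with $\mu_1=1/\lambda$, and conclude via Theorem~\ref{Convergence} that $\chi(\{a_n\})=1/\mu_1=\lambda$, so that $f$ is entire of order exactly $\lambda$.
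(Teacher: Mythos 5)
Your proposal is correct, but it takes a genuinely different route from the paper. You design the Newton-polygon data backwards (taking $m=b$, $\deg P_b=b-a$, and constant $P_0,\dots,P_{b-1}$, so that $s_1=b$, $s_2=0$, $p=2$ and $\rho_1=a/b=\lambda$) and then invoke Theorem~\ref{two.theorem}, whose ``at least one'' clause delivers an entire solution of order exactly $\rho_1=\lambda$; your verification of conditions \eqref{s1.eq}--\eqref{sj2.eq} is accurate, and since $\lambda>0$ the resulting solution cannot be a polynomial, so it is indeed transcendental. The paper instead gives a self-contained construction: it writes $\lambda=q/p$, sets up the shifted equation $A_pz^{\underline{p}}\Delta^p f(z-p)+\cdots+A_1z\Delta f(z-1)-A_0z^{\underline{q}}f(z-q)=0$, and tunes the constants $A_j$ via \eqref{fpq.eq} so that the coefficient recurrence collapses to $(pt)^{\underline{p}}a_{qt}=a_{q(t-1)}$, solving in closed form to $a_{qt}=a_0/(pt)!$; this yields the completely explicit entire solution $f(z)=\sum_{t\ge 0}\frac{a_0}{(pt)!}z^{\underline{qt}}$, whose order is computed directly as $q/p$ via Theorem~\ref{Convergence}, and a final shift $z\mapsto z+q$ using \eqref{summer.eq} converts the equation into the form \eqref{linear.eq}. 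The trade-off: your argument is shorter and reuses proven machinery, but it is non-constructive and inherits the full weight of Theorem~\ref{two.theorem}'s proof (Adams' asymptotic theory for irregular difference equations); the paper's argument needs only the elementary convergence criterion of Theorem~\ref{Convergence} and produces the solution explicitly, which is in the spirit of Section 6 (exhibiting concrete equations and solutions). Your fallback sketch in the last paragraph --- rederiving the recurrence and reading off the two-term characteristic equation --- is essentially a specialization of the proof of Theorem~\ref{two.theorem} rather than the paper's closed-form construction, so even that variant differs from what the paper does.
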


\begin{proof}
For any given positive rational number $\lambda<1$, we write
    $$
    \lambda=\frac{q}{p},
    $$
where $p$ and $q$ are relatively prime positive integer such that $q<p$.
We set a linear difference equation
    \begin{equation}\label{pq.eq}
    A_pz^{\underline{p}}\Delta^p f(z-p)+\cdots+
    A_1z\Delta f(z-1)-A_0z^{\underline{q}}f(z-q)=0,
    \end{equation}
where $A_j$ are constants for $j=0,\ldots,p$.
We consider a formal solution $f(z)=\sum_{n=0}^\infty a_nz^{\underline{n}}$ given by \eqref{frho.eq}. Since
    $$
    z^{\underline{k}}\Delta^m\left(\sum_{n=0}^\infty a_n(z-k)^{\underline{n}}\right)
    =\sum_{n=0}^\infty a_nn^{\underline{m}}z^{\underline{n-m+k}}
    $$
for any positive integer $k$ and $m$, we write \eqref{pq.eq} as
    \begin{equation}\label{Apq.eq}
    A_p\sum_{n=0}^\infty a_nn^{\underline{p}}
    z^{\underline{n}}+\cdots+
    A_1\sum_{n=0}^\infty a_nnz^{\underline{n}}
    -A_0\sum_{n=q}^\infty a_{n-q}z^{\underline{n}}=0.
    \end{equation}
Hence, it gives us that $a_1=\cdots=a_{q-1}=0$ and
    $$
    f(n)a_{n}=a_{n-q}
    $$
for $n\geq q$, where $f(n)=(A_pn^{\underline{p}}
+\cdots+A_1n)/A_0$. Hence we see that $a_{qt-1}=\cdots=a_{qt-q+1}=0$ for $t\in\N$ by setting $n=qt$, and get
    \begin{equation}\label{aqt.eq}
    f(qt)a_{qt}=a_{q(t-1)}.
    \end{equation}
Now let us set $f(qt)=(pt)^{\underline{p}}$, namely,
we choose $A_0,\ldots,A_p$ such that
    \begin{equation}\label{fpq.eq}
    \frac{A_pn^{\underline{p}}
+\cdots+A_1n}{A_0}=\left(\frac{n}{\lambda}\right)^{\underline{p}}.
    \end{equation}
It is easy to see that $A_0/A_p=\lambda^p$ from \eqref{fpq.eq}. It follows from \eqref{aqt.eq} that
    $$
    a_{qt}=\frac{a_{q(t-1)}}{(pt)^{\underline{p}}}=\frac{a_{q(t-2)}}{(pt)^{\underline{p}}(p(t-1))^{\underline{p}}}=\cdots=\frac{a_0}{(pt)!}.
    $$
It implies that $f$ is of the form
    $$
    f(z)=\sum_{t=0}^\infty \frac{a_0}{(pt)!}z^{\underline{qt}}.
    $$
By means of Theorem~\ref{Convergence}, the formal solution converges to an entire function of order $\lambda=q/p$, since
    \begin{equation}\label{lambda.eq}
    \chi(\{a_{qt}\})=\limsup_{t\to\infty}\frac{qt\log(qt)}{-\log(pt)!}=\frac{q}{p}=\lambda<1.
    \end{equation}

We have thus proved that \eqref{pq.eq} possesses an entire solution of order $\lambda$.
We set $z+q$ in place of $z$ in \eqref{pq.eq}, and use a formula
\begin{equation}\label{summer.eq}
\Delta^m f(z+k)= \sum_{j=0}^m \frac{m!}{j!(m-j)!}(-1)^j\sum_{i=0}^{k+m-j}\frac{(k+m-j)!}{i!(k+m-j-i)!}\Delta^i f(z),
 \end{equation}
for non-negative integers $m$ and $k$. Then we obtain a difference equation of the form \eqref{linear.eq}.
\end{proof}

\section{Examples}

In this section we give several examples which illustrate our theorems.

\begin{example}\label{one.example}
In \cite[Remark 6.3]{IY2004} Ishizaki and Yanagihara shows that there exists a transcendental entire solution $f$ of order $\rho(f)=1/3$ of the difference equation
    \begin{equation}\label{one.ex}
    (6z^2+19z+15)\Delta^3 f(z)+(z+3)\Delta^2 f(z)-\Delta f(z)-f(z)=0.
    \end{equation}
From Theorems \ref{one.theorem} to \ref{two.theorem},
we know that $p=2$, $s_1=3$ and $s_2=0$. Hence, there
exists at least one transcendental entire solution of order $\rho(f)<1$ of \eqref{one.ex}.
In addition, the only possibility of order is $\rho_1=1/3$,
and the number of entire solution of order $\rho(f)=1/3$ is one.

Moreover, suppose that \eqref{one.ex} has a formal solution
    $$
    f(z)=\sum_{n=0}^\infty a_nz^{\U{n+\rho}}.
    $$
We substitute it into \eqref{one.ex}, and by Lemma \ref{one.lemma} we obtain that
    \begin{equation*}
    \begin{split}
    6&\sum_{n=-1}^\infty (n+1+\rho)^{\U{3}} a_{n+1}z^{\U{n+\rho}}
    +12\sum_{n=-2}^\infty (n+2+\rho)^{\U{4}} a_{n+2}z^{\U{n+\rho}}+6\sum_{n=-3}^\infty (n+3+\rho)^{\U{5}}
    a_{n+3}z^{\U{z+\rho}}\\
    +&25\sum_{n=-2}^\infty(n+2+\rho)^{\U{3}}a_{n+2}z^{\U{n+\rho}}
    +25\sum_{n=-3}^\infty(n+3+\rho)^{\U{4}}a_{n+3}z^{\U{n+\rho}}\\
    +&15\sum_{n=-3}^\infty(n+3+\rho)^{\U{3}}a_{n+3}z^{\U{n+\rho}}
    +\sum_{n=-1}^\infty(n+1+\rho)^{\U{2}}a_{n+1}z^{\U{n+\rho}}
    +\sum_{n=-2}^\infty(n+2+\rho)^{\U{3}}a_{n+2}z^{\U{n+\rho}}\\
    +&3\sum_{n=-2}^\infty(n+2+\rho)^{\U{2}}a_{n+2}z^{\U{n+\rho}}
    -\sum_{n=-1}^\infty(n+1+\rho)a_{n+1}z^{\U{n+\rho}}
    -\sum_{n=0}^\infty(n+\rho)a_{n}z^{\U{n+\rho}}=0.
    \end{split}
    \end{equation*}
Denoting that
    \begin{equation*}
    \begin{split}
    Q_1(n,\rho)&=6(n+\rho)^{\U{5}}+25(n+\rho)^{\U{4}}+15(n+\rho)^{\U{3}};\\
    Q_2(n,\rho)&=12(n+\rho)^{\U{4}}+26(n+\rho)^{\U{3}}+3(n+\rho)^{\U{2}};\\
    Q_3(n,\rho)&=6(n+\rho)^{\U{3}}+(n+\rho)^{\U{2}}-(n+\rho),
    \end{split}
    \end{equation*}
we have
    \begin{equation}\label{recurrence_ex.eq}
    -a_n+Q_3(n+1,\rho)a_{n+1}+Q_2(n+2,\rho)a_{n+2}+Q_1(n+3,\rho)a_{n+3}=0\quad\text{for}\quad n\geq -3,
    \end{equation}
where $a_j=0$ when $j<0$. Therefore, it follows that $\rho_1=0$, $\rho_2=1$, $\rho_3=2$, $\rho_4=4/3$ and $\rho_5=3/2$ if $n=-3$ in \eqref{recurrence_ex.eq}. We denote
    \begin{equation*}
    \begin{split}
    A_n(\rho)=&(n+\rho+2)(n+\rho+1)(2n+2\rho+1)(3n+3\rho+2)a_{n+2}\\
    &+(n+\rho+1)(n+\rho)(6n+6\rho-5)a_{n+1}-a_n
    \end{split}
    \end{equation*}
for $n\geq 0$, then from \eqref{recurrence_ex.eq}
    $$
    (n+\rho+1)A_{n+1}(\rho)+A_n(\rho)=0.
    $$
If we set $A_0(\rho)=0$, then $A_n=0$ for any $n\geq 0$.
Therefore, it yields
    $$
    (n+\rho+2)(n+\rho+1)(2n+2\rho+1)(3n+3\rho+2)a_{n+2}
    +(n+\rho+1)(n+\rho)(6n+6\rho-5)a_{n+1}-a_n=0.
    $$
Now let us denote
    $$
    B_n(\rho)=(n+\rho+1)(2n+2\rho-1)(3n+3\rho-1)a_{n+1}-a_n,
    $$
thus,
    $$
    (n+\rho+1)B_{n+1}(\rho)+B_n(\rho)=0.
    $$
Let us set $B_n(\rho)=0$, it gives us that $B_n(\rho)=0$
for any $n\geq 0$. Hence, it shows us that
    $$
    (n+\rho+1)(2n+2\rho-1)(3n+3\rho-1)a_{n+1}-a_n=0
    $$
for $n\geq 0$. By the summation formula, see e.g., \cite[P.~48]{KP2001}, we obtain that
    $$
    a_n=\frac{\Gamma(\rho-1/2)\Gamma(\rho-1/3)a_0}{6^n\Gamma(n+\rho+1)\Gamma(n+\rho-1/2)\Gamma(n+\rho-1/3)}.
    $$
The condition $A_0(\rho)=B_0(\rho)=0$ and $n=-2$ in \eqref{recurrence_ex.eq} gives us $\rho=0$, $\rho=3/2$ and $\rho=4/3$.
 In this way, we find three linear independent solutions as follows:
    $$
    f_1(z)=\sum_{n=0}^\infty \frac{1}{6^n\Gamma(n+1)\Gamma(n-1/2)\Gamma(n-1/3)}z^{\U{n}}
    $$
is entire of order $1/3$ from Theorem \ref{Convergence},
    $$
    f_2(z)=\frac{\Gamma(z+1)}{\Gamma(z-1/2)}\sum_{n=0}^\infty \frac{1}{6^n\Gamma(n+1)\Gamma(n-1/2)\Gamma(n-1/3)}(z-3/2)^{\U{n}}
    $$
is meromorphic of order 1 with infinitely many poles,
    $$
    f_3(z)=\frac{\Gamma(z+1)}{\Gamma(z-1/3)}\sum_{n=0}^\infty \frac{1}{6^n\Gamma(n+1)\Gamma(n-1/2)\Gamma(n-1/3)}(z-4/3)^{\U{n}}
    $$
is meromorphic of order 1 with infinitely many poles. We claim that $f_1$, $f_2$ and $f_3$ are linear independent over period field with period one.
\end{example}

\begin{example}
By means of the method in the proof of Theorem~\ref{construct.theorem}, we construct a difference equation that possesses an entire solution of order $3/4$. We now consider a forth order linear difference equation with polynomial coefficient
\begin{multline}
(256 z^3+1920z^2+4656z+3640)\Delta^4 y(z)+(384 z^2+1760z+1944)\Delta^3 y(z)\\
-(80z+120)\Delta^2 y(z)-(81 z^2+ 405 z +446)\Delta y(z)-(81z^2+405z+486)y(z)=0.\label{7.1}
\end{multline}
Let $k\in\mathbb{N}$. We use well known formulas
\begin{equation}
f(z+k)=\sum_{j=0}^k \begin{pmatrix} k  \\ j \end{pmatrix}\Delta^jf(z),
\label{7.2}
\end{equation}
and
\begin{equation}
\Delta^k f(z)=\sum_{j=0}^k \begin{pmatrix} k\\ j \end{pmatrix}(-1)^{k-j}f(z+j).\label{7.3}
\end{equation}
Note that \eqref{summer.eq} is obtained by \eqref{7.2} and \eqref{7.3}. Equation \eqref{7.1} is derived from the following difference equation
\begin{multline}
256 z (z - 1) (z - 2)\Delta^4 y(z-3)+384 z (z - 1)\Delta^3 y(z-2)\\
-80 z\Delta^2 y(z-1)+40\Delta y(z)-81 z (z - 1)y(z-2)=0.\label{7.4}
\end{multline}
In fact, setting $z+3$ in place of $z$ in \eqref{7.4}, we have
\begin{multline}
(256 z^3+ 1920 z^2+4656 z+3640)y(z+4)-(1024 z^3+7296 z^2+16864 z+12616) y(z+3)\\
+(1536 z^3+ 10368 z^2+ 22576 z+15888)y(z+2)-(1024z^3 +6609z^2 +13589z+8934)y(z+1)\\
+(256 z^3+ 1536 z^2+ 2816 z+1536)y(z)=0.\label{7.5}
\end{multline}
Applying \eqref{7.2} to \eqref{7.5}, we obtain \eqref{7.1}.
Consider a solution $y(z)=\sum_{n=0}^\infty a_n z^{\underline{n}}$. 
We note on the first term in \eqref{7.4} that
\begin{align*}
z &(z - 1) (z - 2)\Delta^4 y(z-3)=z (z - 1) (z - 2)\Delta^4 \left(\sum_{n=0}^\infty a_n(z-3)^{\underline{n}}\right)\\
&=z (z - 1) (z - 2)\sum_{n=0}^\infty n(n-1)(n-2)(n-3)a_n (z-3)^{\underline{n-4}}\\
&=\sum_{n=0}^\infty n(n-1)(n-2)(n-3)a_nz^{\underline{n-1}}=\sum_{n=3}^\infty (n+1)n(n-1)(n-2)a_{n+1}z^{\underline{n}}
\end{align*}
Similarly, we compute other terms in \eqref{7.4} and obtain
\begin{align*}
256&\sum_{n=3}^\infty (n+1)n(n-1)(n-2)a_{n+1}z^{\underline{n}}+384\sum_{n=2}^\infty (n+1)n(n-1)a_{n+1}z^{\underline{n}}\nonumber\\
&-80\sum_{n=1}^\infty (n+1)na_{n+1}z^{\underline{n}}+40\sum_{n=0}^\infty (n+1)a_{n+1}z^{\underline{n}}-81\sum_{n=2}^\infty a_{n-2}z^{\underline{n}}=0,
\end{align*}
which gives that $a_1=a_2=0$ and for $n\geq3$
\begin{align}
8(n+1)(2n-1)(4n-5)(4n+1)a_{n+1}-81a_{n-2}=0.\label{7.6}
\end{align}
Hence we see that $a_{3k-2}=a_{3k-1}=0$ for $k\in \mathbb{N}$ and
\begin{equation}
a_{3(k+1)}=\frac{a_{3k}}{(4k+1)(4k+2)(4k+3)(4k+4)}.\label{7.7}
 \end{equation}
This implies that $y(z)$ can be written
\begin{equation*}
 y(z)=\sum_{k=0}^\infty \frac{a_0}{(4k)!}z^{\underline{3k}}.
 \end{equation*}
By means of Theorem~\ref{Convergence}, the formal solution converges to an entire function of order $3/4$, since $\chi(\{a_{3k}\})=\limsup_{k\to\infty}3k\log(3k)/(-\log((4k)!)=3/4<1$.

Observe the corresponding Newton polygon to \eqref{7.1} with $m=4$, $d_4=3$, $d_3=2$, $d_2=1$, $d_1=2$ and $d_0=2$, which gives $p=2$, $s_1=4$ and $s_2=0$. We thus confirmed $\rho=3/4$ by \eqref{orderlist.eq}.
We compute the corresponding recurrent relation to \eqref{recu4.eq} as
    $$
     a_{n+4}Q(n,-4)+a_{n+3}Q(n,-3)+\cdots+a_{n-2}Q(n,2)=0,
    $$
    where
    \begin{align*}
    &Q(n,-4)=8 (n+1) (n+2) (n+3) (n+4) (2 n+5) (4 n+7) (4 n+13)\\
    &Q(n,-3)=24 (n+1) (n+2) (n+3) (2 n+3) (4 n+3) (4 n+9)\\
    &Q(n,-2)=24 (n+1) (n+2) (2 n+1) (4 n-1) (4 n+5)\\
    &Q(n,-1)=(n+1) (256 n^3-465 n^2-357 n-446)\\
    &Q(n,0)=-243 (n+1)(n+2),\quad Q(n,1)=-243 (n+1),\quad Q(n,2)=-81,
        \end{align*}
which gives the corresponding recurrent relation to \eqref{xrelation.eq}.
We obtain the points $(i,j_i)$, $i=0, 1, \dots, 6$ from the first nonzero coefficients described in Section~\ref{ams.sec} as $(0, 0)$, $(1,1)$, $(2,2)$, $(3,3)$, $(4,5)$, $(5,6)$, $(6,7)$. The slope of the segment from $(3,3)$ to $(6,7)$ of the convex broken line implies the order $3/4$.
\end{example}

\begin{example}\label{two.example}

We give a difference equation possessing transcendental entire solutions $d(z)$ and $h(z)$ of order less than 1 whose orders are different, which is a modification of
 \cite[Example~6.1]{IY2004}. First we adopt the \eqref{one.ex} in Example~\ref{one.example}, which gives a transcendental entire solution $f(z)$ of order $\rho(f)=1/3$.
In fact, writing $f(z) = \sum_{n=0}^{\infty} \alpha_n z^{\U{n}}$, we have
\begin{equation}
n(2n-3)(3n-4) \alpha_n = \alpha_{n-1}.\label{6000}
\end{equation}
This implies that $\chi(\{\alpha_n\})=1/3$ and Theorem~\ref{Convergence} concludes that $\rho(f) = 1/3$, see Example~\ref{one.example}, the case $f_1$.

Set $g(z)=zf(z)$. Then $\rho(g)=1/3$, and we see by direct computations that $g(z)$ satisfies an equation of the form
\begin{align}
 L_3[g(z)]&=(6 z^5+37 z^4+84 z^3+83 z^2+30 z)\Delta^3g(z)\nonumber\\
 &\quad -(17 z^4+68 z^3+87 z^2+36 z)\Delta^2g(z)+(33 z^3+97 z^2+66 z)\Delta g(z)\nonumber\\
 &\quad -(z^3+39 z^2+108 z+72)g(z)=0.\label{6001}
\end{align}
Next we consider a difference equation of the fifth order
$\sum_{j=0}^5 c_j^*(z) \Delta^j h(z-j)=0$, where $c_j^*(z)=c_j \cdot z(z-1)\cdots(z-j+1)$, $j=1, 2, \dots, 5$ with $c_5=36$, $c_4=228$, $c_3=271$, $c_2=28$, $c_1=3$, and $c_0^*(z)=c_0z$, $c_0 =-1$.
The equation above can be actually written as
\begin{align}
 L_5[h(z)]&=(36 z^4+588 z^3+3583 z^2+9653 z+9702)\Delta^5h(z)\nonumber\\
 & +(228 z^3+2594 z^2+9806 z+12319)\Delta^4 h(z)\nonumber\\
 &+(271 z^2+1981 z+3596)\Delta^3 h(z)\nonumber\\
 & +(28 z+114)\Delta^2 h(z)-2\Delta h(z)-h(z)=0.\label{6002}
\end{align}
Observe the corresponding Newton polygon to \eqref{6002} with $d_5=4$, $d_4=3$, $d_3=2$, $d_2=1$, $d_1=d_0=0$.
We have $s_1=5$ and $s_2=0$, which implies that possible order is $1/5$ by Theorem \ref{two.theorem}.
Further, we write $h(z) = \sum_{n=0}^{\infty} \gamma_n z^{\U{n}}$. By the definitions of $c^*_j(z)$, $j=1, 2, \dots, 5$ and $zh(z) = \sum_{n=1}^{\infty} (n\gamma_n+\gamma_{n-1})z^{\U{n}}$, we have
\begin{equation}
n(2n-1)(2n-3)(3n-1)(3n-4) \gamma_n = \gamma_{n-1}, \quad n \geq 1\label{6003}
\end{equation}
from which we see that $\chi(\{\gamma_n\})=1/5$ and Theorem~\ref{Convergence} concludes that $\rho(h) = 1/5$. Now we define $L_8[y(z)]=L_3[L_5[y(z)]]$. By computations, we derive
\begin{align}\label{6004}
 L_8&[y(z)]\nonumber\\
\quad  =&(216 z^9+7452 z^8+105678 z^7+794461 z^6+3416591 z^5+8524337 z^4\nonumber\\
&\qquad +12085315 z^3+8972550 z^2+2691000 z)\Delta^8y(z)\nonumber\\
 & + (3348 z^8+86148 z^7+870903 z^6+4406121 z^5+11934400 z^4+17615961 z^3\nonumber\\
 &\qquad+13383629 z^2+4084050 z)\Delta^7 y(z)\nonumber\\
  &+ (14130 z^7+248295 z^6+1591736 z^5+4758666 z^4+7634180 z^3\nonumber\\
  &\qquad+6616921 z^2+2403240 z)\Delta^6 y(z)\nonumber\\
    &+ (-36 z^7+14809 z^6+143264 z^5+466401 z^4+1080909 z^3+1284431 z^2\nonumber\\
    &\qquad-112998 z-698544)\Delta^5 y(z)\nonumber\\
      &-(228 z^6+3675 z^5+30092 z^4-1004 z^3+285305 z^2+1168092 z\nonumber\\
      &\qquad +886968)\Delta^4 y(z)\nonumber\\
        &-(277 z^5+3909 z^4+16443 z^3+155801 z^2+394482 z+258912)\Delta^3 y(z)\nonumber\\
          &-(11 z^4+280 z^3+4861 z^2+12576 z+8208)\Delta^2 y(z)\nonumber\\
   &+(-31 z^3-19 z^2+150 z+144)\Delta y(z)\nonumber\\
   &+ (z^3+39 z^2+108 z+72)y(z)=0.
\end{align}
Observe the corresponding Newton polygon to \eqref{6004} with $d_8=9$, $d_7=8$, $d_6=7$, $d_5=7$,
$d_4=6$, $d_3=5$, $d_2=4$, $d_1=3$ and $d_0=3$, which gives $p=3$, $s_1=8$, $s_2=5$ and $s_3=0$.
Obviously, the possible orders which are less than 1 are $\rho_1=1/3$ and $\rho_2=1/5$.
Clearly $h(z)$ above is a transcendental  entire solution of $L_8[y(z)] = 0$ of order $1/5$.
Another entire solution is given by $L_5[d(z)] = g(z)$ where $g(z) = z f(z)$ is the solution of $L_3[g(z)] = 0$ which was obtained above. Write $d(z)=\sum_{n=0}^{\infty} \delta_n z^{\U{n}}$.
By $zf(z) = \sum_{n=1}^{\infty} (n\alpha_n+\alpha_{n-1})z^{\U{n}}$ and \eqref{6003}, we have
\begin{equation}
n(2n-1)(2n-3)(3n-1)(3n-4) \delta_n - \delta_{n-1} = \alpha_{n-1} + n \alpha_n.\label{6005}
\end{equation}
We write $\delta_n = \delta_n^{(1)} + \delta_n^{(2)}$, and obtain $\{\delta_n\}$ satisfying
\begin{align}
  n(2n-1)(2n-3)(3n-1)(3n-4) \delta_n^{(1)} - \delta_{n-1}^{(1)} &= \alpha_{n-1},\label{6006} \\
  n(2n-1)(2n-3)(3n-1)(3n-4) \delta_n^{(2)} - \delta_{n-1}^{(2)} &= n \alpha_n, \label{6007}
  \end{align}
which gives a candidate of entire solution of $L_5[d(z)] = g(z)$.
Let us denote $D_n=\delta_n^{(1)}/\alpha_n$. From \eqref{6000} and \eqref{6006},
\begin{equation*}
(2n-1)(3n-1)D_n=1+D_{n-1}.
\end{equation*}
Further, we define $(2n-1)(3n-1) D_n = D_n'$. Then we have
\begin{multline}
 D_n' = 1 + \frac{1}{(2n-3)(3n-4)} D_{n-1}' \\
  = 1 + \frac{1}{(2n-3)(3n-4)} + \frac{1}{(2n-3)(3n-4)(2n-5)(3n-7)} D_{n-2}' = \cdots ,  \notag
\end{multline}
which implies that $D'_n=O(1)$ and $1/D'_n=O(1)$ as $n\to\infty$.
This yields that $\delta_n^{(1)}= O(n^{-2})\alpha_n$ and $\alpha_n=O(n^2)\delta_n^{(1)}$.
Similarly, we estimate $\delta_n^{(2)}$.  From \eqref{6000} and \eqref{6007},
\begin{equation*}
(2n-1)(3n-1)E_n=\frac{1}{(2n-3)(3n-4)}+E_{n-1}.
\end{equation*}
Further, we define $(2n-1)(3n-1) E_n = E_n'$ and put $E_n=\delta_n^{(2)}/\alpha_n$. Then we have
\begin{multline*}
E_n' = \frac{1}{(2n-3)(3n-4)}+ E_{n-1}' \\
  = \frac{1}{(2n-3)(3n-4)} + \frac{1}{(2n-3)(3n-4)(2n-5)(3n-7)} E_{n-2}' = \cdots ,
\end{multline*}
which implies that $E'_n=O(1)$ as $n\to\infty$, and hence $\delta_n^{(2)}= O(n^{-2})\alpha_n$.
Since they are non-negative, we see that $\chi(\{\delta_n\})$ is of the same order to $g(z)$, which is of the same order to $f(z)$. Hence we have constructed $d(z)$ of a solution of $L_5[d(z)] = g(z)$ that has the same order to $f(z)$.
\end{example}

\bigskip

\medskip
\noindent
\emph{Katsuya Ishizaki}\\
\textsc{The Open University of Japan, 2-11 Wakaba,\\
Mihama-ku, Chiba, 261- 8586 Japan}\\
\texttt{email:ishizaki@ouj.ac.jp}

\medskip
\noindent
\emph{Zhi-Tao~Wen}\\
\textsc{Shantou University, Department of Mathematics,\\
Daxue Road No.~243, Shantou 515063, China}\\
\texttt{e-mail:zhtwen@stu.edu.cn}

\begin{thebibliography}{99}

\bibitem{Adams}
Adams, C. Raymond,
\emph{On the irregular cases of the linear ordinary difference equation}.
Trans. Amer. Math. Soc. \textbf{30} (1928), no. 3, 507--541.

\bibitem{Birkhoff1911}
Birkhoff, G. D.,
\emph{General theory of linear difference equations}.
Trans. Amer. Math. Soc. \textbf{12} (1911), no. 2, 243--284.
\bibitem{Birkhoff1930}
Birkhoff, G. D.,
\emph{Formal theory of irregular linear difference equations.}
Acta Math. \textbf{54} (1930), no. 1, 205--246.

\bibitem{BT1933}
Birkhoff, G. D. and Trjitzinsky, W. J.,
\emph{Analytic theory of singular difference equations}.
Acta Math. \textbf{60} (1933), no. 1, 1--89.

\bibitem{CF2008}
Chiang, Y.-M. and Feng, S.-J.,
\emph{On the Nevanlinna characteristic of $f(z+\eta)$ and difference equations in the complex plane}.
Ramanujan J. \textbf{16} (2008), no. 1, 105--129.

\bibitem{CF2009}
Chiang, Y.-M.; Feng, S.-J.,
 \emph{On the growth of logarithmic differences, difference quotients and logarithmic derivatives of meromorphic functions}.
 Trans. Amer. Math. Soc. \textbf{361} (2009), no. 7, 3767--3791.

\bibitem{CF2016}
Chiang, Y.-M. and Feng, S.-J.,
\emph{On the growth of logarithmic difference of meromorphic functions and a Wiman--Valiron estimate}.
Constr. Approx. \textbf{44}(3) (2016), 313--326.

\bibitem{Elaydi-2005}
Elaydi, S.,
 \emph{An introduction to difference equations}. Third edition. Springer, New York, 2005.

 \bibitem{GSW1998}
 Gundersen, G. G.; Steinbart, E. M. and Wang, S.
 \emph{The possible orders of solutions of linear differential equations with polynomial coefficients}.
Trans. Amer. Math. Soc. \textbf{350} (1998), no. 3, 1225--1247.

\bibitem{I2011}
Ishizaki, K.,
\emph{On difference Riccati equations and second order linear difference equations}.
Aequationes Math., \textbf{81} (2011), 185--198.

\bibitem{IW2021}
Ishizaki, K. and Wen, Z-T.,
\emph{Binomial series and complex difference equations},
J. Math. Anal. Appl.
\text{497} (2021), no. 1, Paper No. 124844, 15 pp.

\bibitem{IY2004}
Ishizaki, K. and Yanagihara, N.,
\emph{Wiman-Valiron method for difference equations}.
Nagoya Math. J. \textbf{175} (2004), 75--102.


\bibitem{KP2001}
Kelley, W. G. and A. C. Peterson,
\emph{Difference equations. An introduction with applications}.
Second edition. Harcourt/Academic Press, San Diego, CA, 2001.


\bibitem{M-Thomson1933}
Milne-Thomson, L. M.,
\emph{The Calculus of Finite Differences}.
Macmillan, London, 1933.

\bibitem{Norlund1924}
N\"{o}rlund, N. E., \emph{Vorlesungen \"{U}ber Differenzenrechnung}.
Springer, Berlin, 1924.

\bibitem{Ozawa}
 Ozawa, M.,
\emph{On the existence of prime periodic entire functions}.
Kodai Math. Sem. Rep. \textbf{29} (1977/78), no. 3, 308--321.

\bibitem{Perron1909}
 Perron, O.
 \emph{\"{U}ber einen Satz des Herrn Poincar\'e.}
 J. Reine Angew. Math. \textbf{136} (1909), 17--38.

\bibitem{Perron1921}
Perron, O.
\"{U}ber Summengleichungen und Poincar\'esche Differenzengleichungen.
Math. Ann. \textbf{84} (1921), no. 1--2, 1--15.


\bibitem{Pituk}
Pituk, M.,
\emph{More on Poincar\'e's and Perron's theorems for difference equations},
J. Difference Equ. Appl. \textbf{8} (2002), no. 3, 201--216.

\bibitem{Poincare}
Poincar\'e, H.,
\emph{Sur les Equations Lineaires aux Differentielles Ordinaires et aux Differences Finies}.
Amer. J. Math. \textbf{7} (1885), no. 3, 203--258.


\bibitem{Praagman}
Praagman, C.,
\emph{Fundamental solutions for meromorphic linear difference equations in the complex plane, and related problems}.
J. Reine Angew. Math. \textbf{369} (1986), 101--109.

\bibitem{Wittich}
Wittich, H.,
\emph{Neuere Untersuchungen uber eindeutige analytische Funktionen.}
Ergebnisse der Mathematik und ihrer Grenzgebiete, Band 8 Springer-Verlag, Berlin-New York 1968.

\bibitem{Wong-Li}
Wong, R.; Li, H.,
\emph{Asymptotic expansions for second-order linear difference equations. II.}
Stud. Appl. Math. \textbf{87} (1992), no. 4, 289--324.
\end{thebibliography}
\end{document}